\definecolor{verylight}{gray}{0.97}
\definecolor{light}{gray}{0.9}
\definecolor{medium}{gray}{0.85}
\definecolor{dark}{gray}{0.6}
\def\frk{\frak}               % font for "Fraktur"
\def\Phi{{\frk n}}
\def\Phi{{\frk N}}
\def\MF{{\mathcal F}}
\def\MS{{\mathcal S}}
\def\MG{{\mathcal G}}
\def\opn#1#2{\def#1{\operatorname{#2}}} % to make operators
\opn\chara{char} \opn\length{\ell} \opn\pd{pd} \opn\rk{rk}
\opn\projdim{proj\,dim} \opn\injdim{inj\,dim} \opn\rank{rank}
\opn\depth{depth} \opn\grade{grade} \opn\height{height}
\opn\embdim{emb\,dim} \opn\codim{codim}
\opn\Tr{Tr} \opn\bigrank{big\,rank}
\opn\superheight{superheight}\opn\lcm{lcm}
\opn\trdeg{tr\,deg}%\emph{
\opn\reg{reg} \opn\lreg{lreg} \opn\ini{in} \opn\lpd{lpd}
\opn\size{size}\opn\bigsize{bigsize}
\opn\cosize{cosize}\opn\bigcosize{bigcosize}
\opn\sdepth{sdepth}\opn\sreg{sreg}
\opn\link{link}\opn\fdepth{fdepth}
\opn\div{div} \opn\Div{Div} \opn\cl{cl} \opn\Cl{Cl}
\let\epsilon\varepsilon
\let\phi=\varphi
\let\kappa=\varkappa
\opn\Spec{Spec} \opn\Supp{Supp} \opn\supp{supp} \opn\Sing{Sing}
\opn\Ass{Ass} \opn\Min{Min}\opn\Mon{Mon} \opn\dstab{dstab} \opn\astab{astab}
\opn\Syz{Syz}
\opn\Ann{Ann} \opn\Rad{Rad} \opn\Soc{Soc}
\opn\Im{Im} \opn\Ker{Ker} \opn\Coker{Coker} \opn\Am{Am}
\opn\Hom{Hom} \opn\Tor{Tor} \opn\Ext{Ext} \opn\End{End}
\opn\Aut{Aut} \opn\id{id}
\opn\nat{nat}
\opn\pff{pf}%   \pf exists already
\opn\Pf{Pf} \opn\GL{GL} \opn\SL{SL} \opn\mod{mod} \opn\ord{ord}
\opn\Gin{Gin} \opn\Hilb{Hilb}\opn\sort{sort}
\opn\initial{init}
\opn\ende{end}
\opn\height{height}
\opn\type{type}
\opn\set{set}
\opn\aff{aff} \opn\con{conv} \opn\relint{relint} \opn\st{st}
\opn\lk{lk} \opn\cn{cn} \opn\core{core} \opn\vol{vol}
\opn\link{link} \opn\star{star}\opn\lex{lex}
\opn\gr{gr}
\def\pot#1#2{#1[\kern-0.28ex[#2]\kern-0.28ex]}
\opn\dirlim{\underrightarrow{\lim}}
\opn\inivlim{\underleftarrow{\lim}}
\let\to=\rightarrow
\def\Implies{\ifmmode\Longrightarrow \else
        \unskip${}\Longrightarrow{}$\ignorespaces\fi}
\def\implies{\ifmmode\Rightarrow \else
        \unskip${}\Rightarrow{}$\ignorespaces\fi}
\def\iff{\ifmmode\Longleftrightarrow \else
        \unskip${}\Longleftrightarrow{}$\ignorespaces\fi}
 \theoremstyle{plain}
\newtheorem{Theorem}{Theorem}[section]
 \newtheorem{Lemma}[Theorem]{Lemma}
 \newtheorem{Corollary}[Theorem]{Corollary}
 \newtheorem{Proposition}[Theorem]{Proposition}
 \theoremstyle{definition}
 \newtheorem{Definition}[Theorem]{Definition}
\let\epsilon\varepsilon
\let\kappa=\varkappa
\opn\dis{dis}
\def\pnt{{\raise0.5mm\hbox{\large\bf.}}}
\opn\Lex{Lex}
\begin{document}
\title{t-spread strongly stable monomial ideals}
\author {Viviana Ene, J\"urgen Herzog,  Ayesha Asloob Qureshi}

\address{Viviana Ene, Faculty of Mathematics and Computer Science, Ovidius University, Bd.\ Mamaia 124,
 900527 Constanta, Romania}  \email{vivian@univ-ovidius.ro}

\address{J\"urgen Herzog, Fachbereich Mathematik, Universit\"at Duisburg-Essen, Fakult\"at f\"ur Mathematik, 45117 Essen, Germany} \email{juergen.herzog@uni-essen.de}

\address{Ayesha Asloob Qureshi, Sabanc\i \; University, Faculty of Engineering and Natural Sciences, Orta Mahalle, Tuzla 34956, Istanbul, Turkey}\email{aqureshi@sabanciuniv.edu}

\thanks{Part of this paper was written while the second author visited Faculty of Engineering and Natural Sciences at Sabanci University.}

\begin{abstract}
We introduce the concept of $t$-spread monomials and $t$-spread strongly stable ideals. These concepts are a natural generalization of strongly stable and squarefree strongly stable ideals. For the study of this class of ideals we use the $t$-fold stretching operator. It is shown that $t$-spread strongly stable ideals  are componentwise linear. Their height, their graded Betti numbers and their generic initial ideal are determined. We also consider the toric rings whose generators come from $t$-spread principal Borel ideals.
\end{abstract}

\thanks{}
\subjclass[2010]{05E40, 13C14, 13D02}
\keywords{Strongly stable ideals, stretching operator, Betti numbers, Alexander dual, generic initial ideals}

\maketitle
\section*{Introduction}
Among the monomial ideals, the squarefree monomial ideals play a distinguished role as they are linked in many ways to combinatorial objects such as simplicial complexes and graphs. Squarefree monomials in a polynomial ring $K[x_1, \ldots, x_n]$ which generate these ideals are monomials of the form $x_{i_1}\cdots x_{i_d}$ with $i_1 < i_2< \cdots < i_d$. In this paper, we call a monomial $x_{i_1}x_{i_2}\cdots x_{i_d}$ with $i_1\leq i_2\leq\cdots \leq i_d$  {\em $t$-spread,} if $i_j- i_{j-1}\geq t$ for $2\leq j \leq n$. Note that, any monomial is $0$-spread, while the squarefree monomials are  $1$-spread.

A  monomial ideal in $S$ is called  a {\em $t$-spread monomial ideal},  if it is generated by  $t$-spread monomials. For example,  $I=(x_1x_4x_8,x_2x_5x_8,x_1x_5x_9,x_2x_6x_9,x_4x_9) \subset K[x_1, \ldots, x_9]$ is a $3$-spread monomial ideal, but not $4$-spread, because $x_2x_5x_8$ is not a $4$-spread monomial. Note that $2$--spread monomial ideals appear as initial ideals for the defining ideals of the fiber cones of monomial ideals in two variables \cite{HQS}.

There is  a well-known deformation, called polarization, which assigns to each monomial ideal a squarefree monomial ideal, preserving all homological properties of these ideals. In this way, many problems regarding monomial ideals can be reduced to the study of squarefree monomial ideals. On the other hand, in shifting theory, in particular for symmetric algebraic shifting, one used another operator, called {\em stretching operator}, see \cite{K}, \cite{HHBook}. To transform an arbitrary monomial $u=x_{i_1}\cdots x_{i_d}$ with $i_1 \leq  i_2\leq \cdots \leq i_d$  into a squarefree monomial, one defines the stretched monomial  $\sigma(u)=x_{i_1}x_{i_2+1}x_{i_3+2}\cdots x_{i_d+(d-1)}$. Let $I$ be a monomial ideal and $G(I)=\{u_1, \ldots, u_m\}$ be the unique minimal monomial set of generators of $I$. Then $I^{\sigma}$ is defined to be the ideal with $G(I^{\sigma})=\{\sigma(u_1), \ldots, \sigma(u_m)\}$.

In contrast to polarization, the stretching operator is not a deformation and in general does not preserve any of the homological properties of the ideal. For example, if $I=(x_1^2, x_2^2)$, then $I^{\sigma}= (x_1x_2, x_2x_3)$. In this example $I$ is a complete intersection, but $I^{\sigma}$ does not have this property. In fact, $I^{\sigma}$ has a linear resolution. Applying again the operator $\sigma$ to $I^{\sigma}$, we obtain the ideal $I^{\sigma^2}=(x_1x_3, x_2x_4)$ which again is a complete intersection.

It can be easily seen that the $t$-fold iterated operator  $\sigma^t$ establishes a bijection
between all monomials in the polynomial ring $T=K[x_1, x_2, \ldots]$ and all $t$-spread monomials in $T$; see Corollary~\ref{bijection}. While, in general $I$ and $I^{\sigma}$ may have different graded Betti numbers, it turns out that the graded Betti numbers coincide when $I$ is a strongly stable ideal. This fact has been used in shifting theory to define symmetric algebraic shifting; see for example \cite[Section 11.2.2]{HHBook}. More generally, as one of the main result of this paper, we show that  $I$ is a $t$-spread strongly stable ideal if and only if  $I^{\sigma}$ is a $t+1$-spread strongly stable ideal (Proposition~\ref{equal}), and $I$ and $I^{\sigma}$ have the same graded Betti numbers; see Theorem~\ref{betti}. The concept $t$-spread strongly stable ideal generalizes the concepts of strongly stable and squarefree strongly stable ideals, and is defined as follows: a monomial ideal $I$  is called {\em $t$-spread strongly stable}, if for all $t$-spread monomials $u\in I$, all $j\in \supp(u)$ and all $i<j$ such that $x_i(u/x_{j})$ is $t$-spread, it follows that  $x_i(u/x_j)\in I$.

By using Theorem~\ref{betti} and a well known result of Eliahou-Kervaire \cite{EK}, we obtain in Corollary~\ref{formula} an explicit formula for the graded Betti numbers of a $t$-spread strongly stable ideal.

As for ordinary strongly stable ideals, one defines Borel generators of a $t$-spread strongly stable ideal $I$ as a set of $t$-spread monomials in $I$ with the property that  $I$ is the smallest $t$-spread strongly stable ideal containing these generators. Of particular interest is the case when $I$ has precisely one Borel generator. In the special case when the Borel generator in $K[x_1, \ldots, x_n]$ is  $u=x_{n-t(d-1)}\cdots x_{n-t} x_n,$  the resulting ideal is called a \emph{$t$-spread Veronese ideal}. It is generated by all $t$-spread monomial of degree $\deg(u)$. Theorem~\ref{big} lists the homological and algebraic properties of $t$-spread Veronese ideals and their Alexander duals. The results of this theorem are then used in Theorem~\ref{height} to determine the height of any $t$-spread strongly stable ideal. As a consequence, Cohen-Macaulay $t$-spread strongly stable ideals can be  classified; see Corollary~\ref{cmclassified}.

In Section~\ref{alg}, we study the toric $K$-algebras whose generators are the generators of a $t$-spread principal Borel ideal. Generalizing a result of De Negri \cite{N}, we show that these algebras are Koszul, Cohen-Macaulay normal domains. Finally, in Section~\ref{genini}, we show that the generic initial ideal of a $t$-spread strongly stable ideal is simply obtained by the inverse of the  $t$-fold iterated operator $\sigma$.

It should be noted that the graded Betti numbers of $I$ and $I^{\sigma}$ may coincide not only for $t$-spread strongly stable ideals. In fact, it can be easily seen that if $I=J^{\sigma^n}$ for a monomial ideal $J \subset K[x_1, \ldots, x_n]$, then for any $t$, $I$ and $I^{\sigma^t}$ have the same graded Betti numbers, because for such ideals the application of the operator $\sigma$ simply amounts to rename the variables. It would be interesting to determine all monomial ideals for which $I$ and $I^{\sigma}$ coincide.

\section{$t$--spread strongly stable ideals}

The section is intended to generalize the concepts of stable and squarefree stable ideals.

Let $K$ be a field and $S=K[x_1,\ldots,x_n]$ the polynomial ring in $n$ variables over $K$. We denote by $\Mon(S)$ the set of all the monomials in $S.$
 For a monomial $u$ we denote by $\max(u)$ ($\min(u)$) the maximal (minimal) index $i$ for which $x_i$ divides $u.$

\begin{Definition}
A  $t$-spread monomial ideal $I \subset S$ is called {\em $t$-spread stable},  if for all $t$-spread monomials $u\in I$ and for all $i < \max(u)$ such that $x_i(u/x_{\max(u)})$ is a $t$-spread monomial,  it follows that  $x_i(u/x_{\max(u)})\in I$.

 The ideal $I$  is called {\em $t$-spread strongly stable}, if for all $t$-spread monomials $u\in I$, all $j\in \supp(u)$ and all $i<j$ such that $x_i(u/x_{j})$ is $t$-spread, it follows that  $x_i(u/x_j)\in I$.
\end{Definition}
Note that a $t$-spread strongly stable ideal is also $t$-spread stable.

\begin{Lemma}\label{gen}
Let $I$ be a $t$-spread monomial ideal. The following conditions are equivalent:
\begin{enumerate}
\item[{\em (a)}]  $I$ is $t$-spread strongly stable.
\item[{\em (b)}] If $u \in G(I)$, $j\in \supp(u)$ and $i< j$ such that $x_i(u/x_j)$ is a $t$-spread monomial, then $x_i(u/x_j)\in I$.
\end{enumerate}
\end{Lemma}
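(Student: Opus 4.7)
The direction (a) $\Rightarrow$ (b) is immediate: every element of $G(I)$ is a $t$-spread monomial belonging to $I$ (because $I$ is a $t$-spread monomial ideal), so (b) is a special case of (a).

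The substantive direction is (b) $\Rightarrow$ (a). Given a $t$-spread monomial $u \in I$, an index $j \in \supp(u)$, and $i < j$ such that $x_i(u/x_j)$ is $t$-spread, my plan is to write $u = w \cdot v$ for some $v \in G(I)$ (possible since $u \in I$) and split into two cases depending on whether $x_j$ divides $v$.

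The key auxiliary observation that underpins the proof is the following: any divisor of a $t$-spread monomial is again $t$-spread. This is clear from the definition, since deleting some variables from $x_{i_1}\cdots x_{i_d}$ (with $i_{k+1}-i_k \geq t$) only enlarges the gaps between consecutive indices of the remaining variables.

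In Case~1, when $j \in \supp(v)$, I write $x_i(u/x_j) = w \cdot x_i(v/x_j)$. Since $x_i(u/x_j)$ is $t$-spread by hypothesis, the above observation yields that its divisor $x_i(v/x_j)$ is also $t$-spread. Now (b) applied to $v \in G(I)$ gives $x_i(v/x_j) \in I$, hence $x_i(u/x_j) = w \cdot x_i(v/x_j) \in I$. In Case~2, when $j \notin \supp(v)$, the variable $x_j$ must divide $w$, and then $x_i(u/x_j) = v \cdot x_i(w/x_j)$ already lies in $I$ because $v \in G(I) \subset I$.

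There is no serious obstacle here; the argument is a clean case analysis, and the only nontrivial point to flag is the divisor-of-$t$-spread-is-$t$-spread remark, which is what lets us pass from the $t$-spreadness of $x_i(u/x_j)$ (a global hypothesis) to the $t$-spreadness of the relevant factor $x_i(v/x_j)$, which is exactly the form required to invoke hypothesis~(b).
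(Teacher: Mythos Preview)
Your proof is correct and follows essentially the same approach as the paper: pick a minimal generator $v\in G(I)$ dividing $u$ and split into cases according to whether $j\in\supp(v)$. You are in fact slightly more careful than the paper, since you explicitly verify that $x_i(v/x_j)$ is $t$-spread (as a divisor of the $t$-spread monomial $x_i(u/x_j)$) before invoking (b), a step the paper leaves implicit.
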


\begin{proof}
(a) $\Rightarrow$ (b) is obvious. To prove (b) $\Rightarrow$ (a), let $u \in I$ be a $t$-spread  monomial and $i<j$ such that $u'=x_i(u/x_j)$ is a $t$-spread monomial.  Let $v \in G(I)$ such that $v|u$. If $x_j \notin \supp(v)$, then $v|u'$ and $u' \in I$. Otherwise, if $x_j \in \supp(v)$, then $v'=x_i(v/x_j) \in I$ by our assumption and $v'|u'$ and again we have $u' \in I$.  %If $\max(v)<\max(u)$, then $\max(u)=\max(w)$, and hence  $u'=v(x_iw/\max(w))\in I$. If $\max(v)=\max(u)$, then $u'=v'w$  where $v'=x_i(v/x_{\max(v)})$. Since $v'|u'$, and $u'$ is $t$-spread, it follows that $v'$ is $t$-spread. Therefore, $v' \in I$ because of (b), and hence $u'\in I$.
\end{proof}

%A  statement similar to that of Lemma~\ref{gen} holds for $t$-spread strongly stable ideals.

\medskip
The following lemma is crucial for the study of $t$-spread strongly stable ideals.
\begin{Lemma}\label{canonical}
Let $I$ be  a $t$-spread strongly stable ideal and $w \in I$ be a $t$-spread monomial. Then $w=w_1w_2$ such that $\max(w_1) < \min(w_2)$ for some $w_1 \in G(I)$ and $w_2 \in \Mon(S)$.
\end{Lemma}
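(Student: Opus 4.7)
The approach is to exhibit $w_1$ as a carefully chosen minimal generator of $I$ dividing $w$. Since $w$ is $t$-spread (with $t\ge 1$), we may write $w = x_{i_1} x_{i_2}\cdots x_{i_d}$ with $i_1 < i_2 < \cdots < i_d$. Every divisor $v$ of $w$ corresponds to a position set $P = \{j_1 < \cdots < j_e\}\subseteq\{1,\ldots,d\}$ via $v = \prod_{l=1}^e x_{i_{j_l}}$. It suffices to produce $v\in G(I)$ with $v\mid w$ whose position set is $\{1,2,\ldots,e\}$; in that case $w_1 := v$ and $w_2 := x_{i_{e+1}}\cdots x_{i_d}$ (or $w_2 := 1$ if $e=d$) satisfy $\max(w_1) = i_e < i_{e+1} = \min(w_2)$.

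The plan is to select $v\in G(I)$ with $v\mid w$ minimizing $\max(v)$ (possible because this is a finite nonempty set) and to argue that its position set must be $\{1,\ldots,e\}$. Suppose not, and let $k$ be the least position missing from $P$, so $j_l = l$ for $l < k$ and $j_e > e \geq k$. Form $v' := x_{i_k}(v/x_{i_{j_e}})$; its position set in $w$ is $\{1,2,\ldots,k\}\cup\{j_{k+1},\ldots,j_{e-1}\}$. A short check shows that consecutive positions in this sorted set differ by at least $1$, so the corresponding variable indices in $w$ differ by at least $t$, which makes $v'$ a $t$-spread monomial. Since $i_k < i_{j_e}$, the $t$-spread strongly stable hypothesis applied to $v\in I$ gives $v'\in I$. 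Picking any $v''\in G(I)$ with $v''\mid v'$, we get $v''\mid w$ together with $\max(v'') \leq \max(v') < \max(v)$ (strict because the largest position in $\{1,\ldots,k\}\cup\{j_{k+1},\ldots,j_{e-1}\}$ is strictly less than $j_e$). This contradicts the minimality of $\max(v)$.

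The main obstacle is the $t$-spread verification for $v'$ after the swap: one has to see that the new variable $x_{i_k}$ slots in correctly between $x_{i_{k-1}}$ and $x_{i_{j_k}}$ in the sorted product, and that the boundary cases $k=1$, $k=e$, $e=1$ (where one of the blocks $\{1,\ldots,k-1\}$ or $\{j_{k+1},\ldots,j_{e-1}\}$ is empty) are all handled uniformly. All of these reduce to the single observation that consecutive positions in the resulting set differ by at least one, whence the spread condition is inherited from that of $w$. Everything else is routine bookkeeping.
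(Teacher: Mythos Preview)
Your argument is correct and follows essentially the same strategy as the paper's: choose an extremal generator of $I$ dividing $w$ (you minimize $\max(v)$, the paper instead takes one of minimal degree and inducts on $\max(w_1')-\min(w_2')$), then apply the $t$-spread strongly stable exchange to produce a strictly better generator still dividing $w$, the $t$-spread condition for the swapped monomial being automatic since every divisor of the $t$-spread monomial $w$ is itself $t$-spread. One harmless indexing slip: the position set of your $v'$ is $\{1,\ldots,k\}\cup\{j_k,\ldots,j_{e-1}\}$, not $\{1,\ldots,k\}\cup\{j_{k+1},\ldots,j_{e-1}\}$; this does not affect the rest of the argument.
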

\begin{proof}
We may assume that $t >0$, because for $t=0$ such a decomposition for $w$ is known; see \cite[Lemma 1.1]{EK}.

Now, let $w=w'_1w'_2$ with $w'_1 \in G(I)$ such that if some $v \in G(I)$ with $v | w$, then $\deg(w'_1) \leq \deg(v)$. Of course, both $w'_1$ and $ w'_2$ are $t$-spread monomials. Suppose that $k=\max(w'_1) - \min(w'_2) \geq 0$. Then we show that there exists $w''_1 \in G(I)$ such that $w=w''_1 w''_2$ for some monomial $w''_2 \in \Mon(S)$ such that $\max(w''_1) - \min(w''_2) < k$.

Let  $j=\max(w'_1)$ and $i= \min(w'_2)$. Then $w''_1=x_i(w'_1/x_j)$ is $t$-spread because $\supp(w''_1) \subseteq \supp(w)$. Let $w''_2=x_j(w'_2/x_i) $. Then $w''_2$ is $t$-spread  as well and  $w=w''_1w''_2$.
Since $I$ is $t$-spread strongly stable and $i<j$, we have $w''_1 \in I$.  Also, $\deg(w'_1)= \deg(w''_1)$, and hence, by the assumption on the $\deg(w'_1)$,   we see that $w''_1 \in G(I)$. Moreover, $\max(w''_1) < \max(w'_1)$ and $\min(w'_2) < \min(w''_2)$ and $\max(w''_1) - \min(w''_2) <k $. By applying induction on $k$, we get the desired result.
\end{proof}

\medskip
A $t$-spread stable ideal need not to have linear quotients. For example, the ideal $I=(x_1x_3x_5,x_1x_4x_6)$  is $2$-spread stable, but does not have linear quotients. However, we have

\begin{Theorem}
\label{ayesha}
The $t$-spread strongly  stable ideals have linear quotients. In particular, they are componentwise linear.
\end{Theorem}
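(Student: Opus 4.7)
My plan is to establish linear quotients by choosing an explicit order on $G(I)$ and describing each colon ideal. Enumerate $G(I) = \{u_1, \ldots, u_m\}$ so that $u_i$ precedes $u_k$ whenever $\deg u_i < \deg u_k$, or $\deg u_i = \deg u_k$ and the sorted support of $u_i$ precedes that of $u_k$ in lexicographic order. For each $k$, set
$$ L_k = \{ x_\ell : \exists\, j \in \supp(u_k) \text{ with } \ell < j \text{ and } x_\ell u_k/x_j \text{ is } t\text{-spread}\}.$$
The central claim is $(u_1, \ldots, u_{k-1}) : u_k = (L_k)$, which directly yields linear quotients; the componentwise linearity then follows from the standard fact that a monomial ideal with linear quotients is componentwise linear.

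For $(L_k) \subseteq (u_1, \ldots, u_{k-1}) : u_k$, take $x_\ell \in L_k$ with witness $j$. By strong stability applied to $u_k$, the $t$-spread monomial $w := x_\ell u_k/x_j$ lies in $I$, and Lemma~\ref{canonical} gives $w = w_1 w_2$ with $w_1 \in G(I)$ and $\max(w_1) < \min(w_2)$. Since $x_j$ divides $u_k$ but not $w$, we have $w_1 \neq u_k$; and since $\deg w_1 \leq \deg u_k$, equality forces $w_1 = w$, whose sorted support is that of $u_k$ with $j$ replaced by the smaller $\ell$, hence lex-smaller than $u_k$'s. In every case $w_1$ precedes $u_k$, and $w_1 \mid x_\ell u_k$ yields $x_\ell u_k \in (u_1, \ldots, u_{k-1})$.

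For the reverse inclusion it suffices, for each $i < k$, to find $\ell \in \supp(u_i) \setminus \supp(u_k)$ with $x_\ell \in L_k$; then $u_i/\gcd(u_i, u_k) = \prod_{\ell' \in \supp(u_i)\setminus\supp(u_k)} x_{\ell'}$ lies in $(L_k)$. When $\deg u_i = \deg u_k$, let $(a_1, \ldots, a_d)$ and $(b_1, \ldots, b_d)$ be the sorted supports and $r$ the first position where they differ; then $a_r < b_r$ by the ordering, and taking $\ell = a_r$, $j = b_r$ does the job: the sorted support of $x_{a_r} u_k/x_{b_r}$ is $(a_1, \ldots, a_r, b_{r+1}, \ldots, b_d)$, and the critical gaps are controlled by $a_r - a_{r-1} \geq t$ (from $u_i$, when $r \geq 2$) and $b_{r+1} - a_r > b_{r+1} - b_r \geq t$ (when $r < d$).

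The remaining case $\deg u_i < \deg u_k$ is the main obstacle. Here $D := \supp(u_i) \setminus \supp(u_k)$ is nonempty since $u_i \nmid u_k$ by the minimality of $u_k$. Assume for contradiction that no $\ell \in D$ belongs to $L_k$; then each $\ell \in D$ is \emph{bad} in one of two ways---either $\ell > \max(u_k)$, or the largest element of $\supp(u_k)$ strictly below $\ell$ lies at distance less than $t$ from $\ell$. The idea is that this slack permits a sequence of $t$-spread strong stability moves on $u_i$ that migrates each bad $\ell$ down into $\supp(u_k)$, producing a monomial $u' \in I$ of the same degree as $u_i$ with $\supp(u') \subseteq \supp(u_k)$; then $u'$ properly divides $u_k$, contradicting $u_k \in G(I)$. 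The technical core is to select these moves so that every intermediate monomial remains $t$-spread, and this is the point in the argument I expect to demand the most care.
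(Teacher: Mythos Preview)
Your approach is sound, and the different-degree case can be completed along the lines you sketch. The missing lemma is this: write $\supp(u_i)=\{a_1<\cdots<a_e\}$ and $\supp(u_k)=\{b_1<\cdots<b_d\}$ with $e<d$; if every element of $D=\supp(u_i)\setminus\supp(u_k)$ is bad, then $b_p\le a_p$ for all $p\le e$. For $p=1$ this is immediate, since $a_1<b_1$ would put $a_1\in L_k$ (take $j=b_1$). For $p\ge 2$, if $b_{p-1}\le a_{p-1}$ but $a_p<b_p$, then $b_{p-1}\le a_{p-1}<a_p<b_p$ forces $a_p\notin\supp(u_k)$, so $a_p$ is bad with nearest lower $b$-index $b_{p-1}$, giving $a_p-b_{p-1}<t$; yet $a_p-b_{p-1}\ge a_p-a_{p-1}\ge t$, a contradiction. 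Once $b_p\le a_p$ for all $p$, migrate left to right: at the $p$-th step the current monomial is $x_{b_1}\cdots x_{b_{p-1}}x_{a_p}\cdots x_{a_e}$, and replacing $a_p$ by $b_p$ keeps it $t$-spread because $b_p-b_{p-1}\ge t$ (from $u_k$) and $a_{p+1}-b_p\ge a_{p+1}-a_p\ge t$ (from $u_i$). You end with $x_{b_1}\cdots x_{b_e}\in I$ properly dividing $u_k$, contradicting $u_k\in G(I)$. So your contradiction goes through, and no delicate bookkeeping about ``both kinds of badness'' is needed.

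That said, the paper avoids this entire case distinction by ordering $G(I)$ with the \emph{pure} lexicographic order rather than degree-then-lex. In pure lex, whenever $u_i>_{\lex}u_k$ (regardless of degrees) the sorted index sequences must differ at some first position $r$ with $a_r<b_r$; the only alternative, that one sequence is an initial segment of the other, is ruled out since neither minimal generator divides the other. Your same-degree argument then applies verbatim to every pair $i<k$, and the different-degree case simply disappears. Thus your route works, but the extra obstacle you identify is an artefact of the graded ordering; switching to pure lex collapses the proof to a single paragraph.
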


\begin{proof} 
Let $G(I)=\{u_1, u_2, \ldots, u_m\}$ ordered with respect to the pure lexicographical order. Let $r \leq m$ and $J=( u_1,\ldots,u_{r-1})$. Then in order to show that $J:u_r$ is generated by variables, it is enough to show that  for all $1 \leq k \leq r-1$ there exists $x_i \in J:u_r$ such that $x_i$ divides $u_k/\gcd(u_k, u_r)$. Let $u_k=x_{i_1}x_{i_2}\cdots x_{i_s}$ with $i_1\leq i_2\leq \cdots \leq i_s$ and $u_r=x_{j_1}x_{j_2}\cdots x_{j_t}$ with $j_1\leq j_2\leq \cdots \leq j_t$. Since $u_k >_{\lex} u_r$, there exists $d$ with $1 \leq d \leq t$ such that $i_1=j_1, \ldots,  i_{d-1}=j_{d-1}$ and $i_d < j_d$. Let $v=x_{i_d} (u_r/x_{j_d})$. Then  $v=x_{j_1}x_{j_2}\cdots x_{j_{d-1}}x_{i_d}x_{j_{d+1}}\cdots  x_{j_{t}}$. Since  $i_d -j_{d-1} = i_d-i_{d-1} \geq t$ and  $j_{d+1} -i_{d} > j_{d+1}-j_{d} \geq t$, it follows that $v$ is $t$-spread, and so $v \in I$ and $v >_{\lex} u_r$. In fact,  $v \in J$. Indeed, by Lemma~\ref{canonical}, there exists $u_l \in G(I)$ such that $v =u_lw$ and $\max(u_l) < \min(w)$. Suppose that $v \notin J$. Then $u_l \leq_{lex} u_r$. From the presentation of $v=u_lw$, it follows that $v \leq_{lex} u_r$, a contradiction.

Now, as we know that $v \in J$, it follows that $x_{i_d} \in J:u_r$. This  completes the proof, since $x_{i_d}$ divides $u_k/\gcd(u_k, u_r)$.
\end{proof}

Let $I$ be a $t$-spread strongly stable ideal with $G(I)=\{u_1, u_2, \ldots, u_m\}$ ordered  with respect to the pure lexicographic order. As in \cite{HT} we define
\[
\set(u_k) = \{i\:\; x_i\in (u_1,\ldots,u_{k-1}):u_k\} \quad \text{for $k=1,\ldots,m$}.
\]
The proof of Theorem~\ref{ayesha} shows that $\set(u_k)$ is the set of positive integers $i$ satisfying
\begin{equation}\label{henning}
\text{$i<\max(u_k)$, $i\not\in\supp(u_k)$  and $i-j\geq t$ for all $j\in \supp(u_k)$ with $j<i$}.
\end{equation}

We  set  $I_j=(u_1,\ldots,u_j)$ for $j=1,\ldots, m$.
Let $M(I)$ be the set of all monomials in $I$.
The  {\em decomposition map} $g\: M(I)\to G(I)$ is defined as follows: for  $u\in M(I)$ we let   $g(u)=u_j$, where  $j$
is the smallest number such that $u\in I_j$. The decomposition map is {\em regular}, if
$\set(g(x_iu_k))\subset\set(u_k)$ for all $i\in\set(u_k)$ and all $u_k\in G(I)$.

\medskip
The resolution of monomial ideals with linear quotients and regular decomposition function  can be explicitly described; see \cite[Theorem 1.12]{HT}. Stable and squarefree stable ideals have regular decomposition functions. However, even 2-spread  strongly stable monomial ideals in general do not have regular decomposition functions.

For example, consider the 2-spread  strongly stable ideal
\[
I=(x_1x_3,x_1x_4, x_1x_5,x_1x_6,x_2x_4, x_2x_5,x_2x_6,x_3x_5,x_3x_6).
\]
 Then, $\set(x_3x_6)= \{1,2,5\}$,  $g(x_2x_3x_6)= x_2x_6$ and $\set(g(x_2x_3x_6))=\{1,4,5\} \not\subseteq \set(x_3x_6)$.

 \medskip
In what follows, we will establish a bijection between $t$-spread strongly stable ideals and $t+1$-spread strongly stable ideals which preserves the graded Betti numbers.

Let $T= K[x_1, x_2, \ldots]$ be a polynomial ring in infinitely many variables. We denote by  $\Mon(T;t)$ the set of all $t$-spread monomials in $T$. Then $\Mon(T;0)$ is just the set of all monomials of $T$ which we simply denote by $\Mon(T)$.

\begin{Definition}
Let $u=\prod_{j=1}^{d}x_{i_j} \in T$ with $i_1 \leq i_2 \leq \cdots \leq i_d$. Then we define
$\sigma: \Mon(T) \rightarrow \Mon(T)$ by
\[
\sigma(u)=\prod_{j=1}^{d} x_{i_{j}+(j-1)}.
\]
\end{Definition}

Note that, $\sigma$ induces a map  $\Mon(T;t) \rightarrow \Mon(T;t+1)$ which we again denote by $\sigma$. Indeed, if $u$ is a $t$-spread monomial then $\sigma(u)$ is a $t+1$-spread monomial, because $(i_{j+1}+j)- (i_j+(j-1))= i_{j+1}- i_j+1 \geq t+1$.
\begin{Lemma}\label{bijection}
The map $\sigma: \Mon(T;t) \rightarrow \Mon(T;t+1)$ is bijective.
\end{Lemma}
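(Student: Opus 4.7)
The plan is to prove bijectivity by exhibiting an explicit inverse. Define $\tau: \Mon(T;t+1) \to \Mon(T)$ by
\[
\tau\Bigl(\prod_{j=1}^{d} x_{k_j}\Bigr) = \prod_{j=1}^{d} x_{k_j-(j-1)},
\]
where $v = \prod_{j=1}^d x_{k_j}$ is a $(t+1)$-spread monomial, written with $k_1 < k_2 < \cdots < k_d$ (strict inequality holds automatically since $t+1 \geq 1$). The first step is to check that $\tau$ really lands in $\Mon(T;t)$: the indices satisfy $k_1-0 \geq 1$, and consecutive differences are $(k_{j+1}-j) - (k_j-(j-1)) = k_{j+1}-k_j - 1 \geq t$, so $\tau(v)$ is a well-defined $t$-spread monomial.

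Next I would verify $\tau \circ \sigma = \id_{\Mon(T;t)}$. Given $u = \prod_{j=1}^d x_{i_j}$ with $i_1 \leq \cdots \leq i_d$, we have $\sigma(u) = \prod x_{i_j+(j-1)}$, and the indices $i_j + (j-1)$ are \emph{strictly} increasing because $(i_{j+1}+j)-(i_j+(j-1)) = i_{j+1}-i_j+1 \geq 1$. Hence this is the canonical increasing presentation of $\sigma(u)$, so applying $\tau$ subtracts $(j-1)$ from the $j$-th factor and recovers $u$ exactly.

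Symmetrically, I would check $\sigma \circ \tau = \id_{\Mon(T;t+1)}$. For $v = \prod x_{k_j}$ with $k_1 < \cdots < k_d$, the indices $k_j-(j-1)$ in $\tau(v)$ are non-decreasing (since $(k_{j+1}-j)-(k_j-(j-1)) = k_{j+1}-k_j-1 \geq t \geq 0$), so this is the canonical non-decreasing presentation of $\tau(v)$; therefore $\sigma(\tau(v)) = \prod x_{(k_j-(j-1))+(j-1)} = v$.

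The only conceptual point that needs care is the bookkeeping of the canonical representation: the definitions of $\sigma$ and $\tau$ depend on writing a monomial with its exponents listed in sorted order, so I must confirm that the orderings produced by $\sigma$ and by $\tau$ agree with the input ordering in each direction. Both of the monotonicity observations above handle this, and once they are in place the two identities $\sigma\tau=\id$ and $\tau\sigma=\id$ are one-line calculations. I do not anticipate any real obstacle beyond this indexing check.
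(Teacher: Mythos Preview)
Your proof is correct and follows exactly the same approach as the paper: both construct the explicit inverse $\tau$ given by $\tau(\prod x_{k_j}) = \prod x_{k_j-(j-1)}$. Your version is in fact more thorough, since the paper merely writes down $\tau$ and declares it to be the inverse, whereas you verify that $\tau$ lands in $\Mon(T;t)$ and that the canonical orderings are preserved so that $\sigma\tau$ and $\tau\sigma$ are both the identity.
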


\begin{proof}
 Let $u=\prod_{j=1}^{d}x_{i_j} \in T$ with $i_1 \leq i_2 \leq \cdots \leq i_d$. We define the inverse map of $\sigma$ by $\tau:\Mon(T;t+1) \rightarrow \Mon(T;t)$ by
\[
\tau(u)=\prod_{j=1}^{d} x_{i_{j-(j-1)}}.
\]
\end{proof}

\begin{Corollary}\label{bijection}
The iterated map $\sigma^t: \Mon(T) \rightarrow \Mon(T;t)$ establishes a bijection between the set of all monomial in $T$ and the set of all $t$-spread monomials in $T$.
\end{Corollary}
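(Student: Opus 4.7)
The plan is to deduce this immediately from the preceding Lemma by iteration. Lemma~\ref{bijection} asserts that for every integer $s\geq 0$ the map $\sigma\colon \Mon(T;s)\to\Mon(T;s+1)$ is a bijection (the proof given there for $\Mon(T;t)\to\Mon(T;t+1)$ applies verbatim to any $s$, since the argument only uses that the source consists of $s$-spread monomials and produces $(s+1)$-spread monomials). Since $\Mon(T;0)=\Mon(T)$, the $t$-fold composite
\[
\sigma^t\colon \Mon(T)=\Mon(T;0)\xrightarrow{\sigma}\Mon(T;1)\xrightarrow{\sigma}\cdots\xrightarrow{\sigma}\Mon(T;t)
\]
is a bijection as a composition of bijections.

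Concretely, I would just run induction on $t$. The base case $t=0$ is trivial since $\sigma^0=\id$. For the inductive step, assuming $\sigma^{t-1}\colon\Mon(T)\to\Mon(T;t-1)$ is a bijection, the identity $\sigma^t=\sigma\circ\sigma^{t-1}$ together with Lemma~\ref{bijection} (applied with $s=t-1$) exhibits $\sigma^t$ as a composition of two bijections, hence a bijection.

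There is essentially no obstacle here; the only thing worth flagging is that one should verify the image of $\sigma^t$ really lands in $\Mon(T;t)$ and not merely in $\Mon(T;t-1)$, which is guaranteed by the fact already recorded after the definition of $\sigma$ (namely, $\sigma$ raises the spread by exactly one). If one wanted an explicit inverse, it would be given by $\tau^t$, where $\tau$ is the inverse constructed in the proof of Lemma~\ref{bijection}; on a $t$-spread monomial $u=\prod_{j=1}^d x_{i_j}$ with $i_1\leq\cdots\leq i_d$, one has $\sigma^{-t}(u)=\tau^t(u)=\prod_{j=1}^d x_{i_j-t(j-1)}$.
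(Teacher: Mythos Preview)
Your proposal is correct and is exactly the approach the paper intends: the Corollary is stated without proof as an immediate consequence of the preceding Lemma, which gives the bijection $\sigma\colon \Mon(T;s)\to\Mon(T;s+1)$ for each $s$, and iterating yields the claim. Your explicit formula $\tau^t(u)=\prod_{j=1}^d x_{i_j-t(j-1)}$ for the inverse is also consistent with how $\tau$ is used later in the paper (e.g., in Corollary~\ref{formula}).
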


\begin{Definition}
Let $I$ be a monomial ideal. Then we let $I^{\sigma}$ be the ideal generated by the monomials $\sigma(u)$ with $u \in G(I)$.
\end{Definition}
Observe that if $I$ is a $t$-spread ideal then $I^{\sigma}$ is a $t+1$-spread ideal.

\begin{Proposition}\label{equal}
Let $I$ be a monomial ideal. Then $I$ is a  $t$-spread strongly stable ideal if and only if $I^{\sigma}$ is a $t+1$-spread strongly stable ideal.
\end{Proposition}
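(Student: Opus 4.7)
The plan is to reduce both strongly stability conditions to a common componentwise partial order on sorted index sequences, then observe that $\sigma$ is an order isomorphism. On $\Mon(T;t)$ define $u' \preceq u$ if $\deg u' = \deg u$ and the sorted index sequences $i'_1 \le \cdots \le i'_d$ of $u'$ and $i_1 \le \cdots \le i_d$ of $u$ satisfy $i'_r \le i_r$ for every $r$. Since $\sigma(u)$ has sorted indices $i_r + (r-1)$, translating by $r-1$ gives the immediate equivalence: $u' \preceq u$ in $\Mon(T;t)$ if and only if $\sigma(u') \preceq \sigma(u)$ in $\Mon(T;t+1)$. Moreover, a direct sort-and-compare shows that any single strongly stable move $u \mapsto x_i(u/x_j)$ that produces a $t$-spread result yields a monomial $\preceq u$.

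The core step is a monotonicity lemma: if $I$ is $t$-spread strongly stable, $u \in I$ is $t$-spread, and $u' \preceq u$ in $\Mon(T;t)$, then $u' \in I$. I would prove this by induction on $\Delta(u,u') := \sum_r (i_r - i'_r)$. When $\Delta > 0$, let $r_0$ be the smallest index with $i'_{r_0} < i_{r_0}$; by minimality $i_r = i'_r$ for all $r < r_0$. The $t$-spread property of $u'$ yields $i'_{r_0} \ge i'_{r_0-1} + t = i_{r_0-1} + t$ (with the convention $i_0 := -\infty$), and combined with $i'_{r_0} < i_{r_0}$ this forces the strict bound $i_{r_0} \ge i_{r_0-1} + t + 1$. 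Consequently the monomial $\hat u := x_{i_{r_0}-1}(u/x_{i_{r_0}})$ is still $t$-spread, so lies in $I$ by the strongly stable hypothesis; and $\hat u \succeq u'$ with $\Delta(\hat u, u') = \Delta(u,u') - 1$, closing the induction.

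Given the lemma, the proposition is a short chase. Assume $I$ is $t$-spread strongly stable. By Lemma~\ref{gen} it is enough to verify the $(t+1)$-spread stability condition on $G(I^\sigma) = \sigma(G(I))$. So take $v = \sigma(u)$ with $u \in G(I)$, and a $(t+1)$-spread $v' = x_a(v/x_b)$ with $a < b$ and $b \in \supp(v)$. The sort-and-compare observation gives $v' \preceq v$, hence by the order isomorphism $\tau(v') \preceq u$ in $\Mon(T;t)$. The monotonicity lemma applied to $I$ yields $\tau(v') \in I$, so $v' = \sigma(\tau(v')) \in I^\sigma$. The converse implication is the mirror argument using $\tau$ in place of $\sigma$ and monotonicity applied to $I^\sigma$.

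The main obstacle I anticipate is the monotonicity lemma, and within it the minimality argument forcing a strictly larger-than-$t$ gap to the left of $i_{r_0}$ in $u$ — this is precisely what allows the single-index decrement to preserve $t$-spreadness and lets the induction run. Once the lemma is in place, the index arithmetic relating $\sigma$ to the componentwise order is routine.
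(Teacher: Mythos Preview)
Your overall strategy via the componentwise order $\preceq$ is sound, and the monotonicity lemma is correctly stated and argued. However, the final inference ``$\tau(v') \in I$, so $v' = \sigma(\tau(v')) \in I^\sigma$'' is a genuine gap. The ideal $I^\sigma$ is by definition generated by $\sigma(G(I))$; knowing that a $t$-spread monomial $w$ lies in $I$ only tells you that \emph{some} $u_0 \in G(I)$ divides $w$, and in general $u_0 \mid w$ does \emph{not} imply $\sigma(u_0) \mid \sigma(w)$. For instance with $t=1$, take $u_0 = x_3$ and $w = x_1x_3$: then $\sigma(u_0)=x_3$ but $\sigma(w)=x_1x_4$. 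So ``$w\in I$ and $w$ is $t$-spread'' does not formally give ``$\sigma(w)\in I^\sigma$'' for free.

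This is exactly the point where the paper invokes Lemma~\ref{canonical}: for $I$ $t$-spread strongly stable and $w \in I$ $t$-spread, one factors $w = w_1 w_2$ with $w_1 \in G(I)$ and $\max(w_1) < \min(w_2)$; with that ordering of supports one does get $\sigma(w_1) \mid \sigma(w)$, hence $\sigma(w) \in I^\sigma$. You could alternatively close the gap with your own monotonicity lemma: if $w = x_{c_1}\cdots x_{c_d}$ (sorted) and some $u_0 = x_{c_{r_1}}\cdots x_{c_{r_s}} \in G(I)$ divides $w$, then the initial segment $x_{c_1}\cdots x_{c_s} \preceq u_0$ lies in $I$; iterating on degree produces an initial segment of $w$ that is a minimal generator, and for an initial-segment divisor $\sigma$ \emph{does} respect divisibility. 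Either way, this step needs an argument --- it is the heart of the proposition, not a formality --- and the same issue recurs symmetrically in your converse direction.
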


\begin{proof}
Let $I$ be a $t$-spread ideal and
 $\sigma(u)=\prod_{j=1}^{d} x_{i_{j}+(j-1)}$ with $u \in G(I)$. We want to show that for all $j \in \supp(\sigma(u))$ and $k<j$ such that $v=x_k(\sigma(u)/x_j)$ is a $t+1$-spread monomial then $v \in I^{\sigma}$. Since $x_j | \sigma(u)$, it follows that $j=i_l+{l-1}$ for some $1 \leq l \leq d$. Then
\[
v=x_{i_1}x_{i_2+{1}} \cdots x_{{i_{l-1}}+(l-2)} x_k x_{{i_{l+1}}+l}\cdots x_{{i_d}+(d-1)}
\]
Let $w=\tau(v)$. Then, first we show that $w \in I$. Indeed,
\[
w=x_{i_1}x_{i_2} \cdots x_{{i_{l-1}}} x_{k-(l-1)} x_{{i_{l+1}}}\cdots x_{{i_d}},
\]
therefore, $w=x_{k-(l-1)}(u/x_{i_l})$ and $k-(l-1) <i_l$ . Moreover, $w$ is $t$-spread. Indeed, since $v$ is $t+1$-spread, we have $k-(i_{l-1}+(l-2) )\geq t+1$ which implies $k-(l-1)-i_{l-1}\geq t$, and we have $i_{l+1}+l-k\geq t+1$ which implies $i_{l+1}-(k-(l-1)) \geq t+1$. Then, by Lemma~\ref{canonical}, $w=w_1w_2$ such that $\max(w_1) < \min(w_2)$. This implies that $v=\sigma(w)=\sigma(w_1)w'$ where $w'$ is a monomial. Therefore, $v \in I^{\sigma}$.

The converse may be handled in a similar way.
\end{proof}

For the proof of Theorem~\ref{betti}, we use the following result which is the immediate consequence of  \cite[Lemma 1.5]{HT}.

%For a monomial ideal $I$, we set $G(I)_j=\{u \in G(I)\; : \; \deg(u)=j\}$.

\begin{Lemma}\label{set1}
Let $I$ be a monomial ideal with linear quotients. Then
\[
\beta_{i,i+j}(I)= |\{\alpha \subset \set (u)\; : \; u \in G(I)_j \text{ and } |\alpha|=i\}|,
\] where $G(I)_j=\{u \in G(I)\; : \; \deg(u)=j\}$.
\end{Lemma}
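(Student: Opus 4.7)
The plan is to extract this as a straightforward reformulation of \cite[Lemma 1.5]{HT}. That lemma, proved there via an iterated mapping cone construction along the linear-quotients order of $G(I)$, provides the formula
\[
\beta_{i,i+j}(I)=\sum_{u\in G(I)_j}\binom{|\set(u)|}{i}
\]
for any monomial ideal $I$ with linear quotients. The summand $\binom{|\set(u_k)|}{i}$ records the Koszul contribution from the short exact sequence
\[
0\to S/(I_{k-1}:u_k)(-\deg u_k)\to S/I_{k-1}\to S/I_k\to 0,
\]
since $I_{k-1}:u_k$ is generated by exactly the variables $\{x_i : i\in\set(u_k)\}$, so its minimal resolution is a Koszul complex.

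To pass from this formula to the stated cardinality, I would simply observe that $\binom{|\set(u)|}{i}$ is by definition the number of subsets $\alpha\subseteq\set(u)$ with $|\alpha|=i$. Summing over $u\in G(I)_j$ therefore gives the number of pairs $(u,\alpha)$ with $u\in G(I)_j$, $\alpha\subseteq\set(u)$, and $|\alpha|=i$, which matches the right-hand side of the statement (the set there being naturally read as a disjoint union indexed by $u$). There is essentially no obstacle; the only interpretational care needed is that distinct generators $u,u'\in G(I)_j$ whose $\set$-values happen to share a common subset should each be counted, and this is forced by the fact that $\alpha\subseteq\set(u)$ depends on the specific generator $u$.
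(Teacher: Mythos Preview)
Your proposal is correct and matches the paper's own treatment: the paper does not give a separate proof but simply states that the lemma is an immediate consequence of \cite[Lemma 1.5]{HT}, which is exactly the formula $\beta_{i,i+j}(I)=\sum_{u\in G(I)_j}\binom{|\set(u)|}{i}$ you invoke. Your observation that $\binom{|\set(u)|}{i}$ counts $i$-subsets of $\set(u)$, together with the clarification that the right-hand side is a disjoint union over $u$, is precisely the ``immediate consequence'' the paper intends.
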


\begin{Theorem}\label{betti}
Let $I$ be a $t$-spread strongly stable ideal. Then $\beta_{i,i+j}(I)= \beta_{i,i+j}(I^{\sigma})$ for all $i$ and $j$.
\end{Theorem}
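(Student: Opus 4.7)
Both $I$ (by Theorem~\ref{ayesha}) and $I^{\sigma}$ (by Proposition~\ref{equal} followed by Theorem~\ref{ayesha}) have linear quotients, so Lemma~\ref{set1} expresses each graded Betti number in terms of the multiset $\{|\set(u)| : u \in G(I)_j\}$. My plan is therefore to establish two things: a degree-preserving bijection $G(I) \to G(I^{\sigma})$ induced by $\sigma$, and the equality $|\set(u)| = |\set(\sigma(u))|$ for every $u \in G(I)$. Together with Lemma~\ref{set1} these immediately give the theorem.

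For the second item I would extract a closed formula directly from the description (\ref{henning}). Writing $\supp(u) = \{i_1 < \cdots < i_d\}$ and partitioning the elements of $\set(u)$ according to the largest support index strictly below the given~$i$, the lowest piece contributes $i_1 - 1$ values (the constraint being vacuous), while each intermediate piece $i_k < i < i_{k+1}$ contributes $i_{k+1} - i_k - t$ values. The sum telescopes to $|\set(u)| = \max(u) - 1 - (d-1)t$. Applied to $\sigma(u)$ as a minimal generator of the $(t+1)$-spread strongly stable ideal $I^{\sigma}$, and using $\deg(\sigma(u)) = d$ together with $\max(\sigma(u)) = \max(u) + d - 1$, the analogous formula (with $t+1$ in place of $t$) yields $|\set(\sigma(u))| = (\max(u) + d - 1) - 1 - (d-1)(t+1) = \max(u) - 1 - (d-1)t$, matching.

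The main obstacle will be verifying that $\sigma$ actually carries $G(I)$ onto $G(I^{\sigma})$; equivalently, that $\sigma(u) \nmid \sigma(v)$ for distinct $u, v \in G(I)$. Suppose such a divisibility holds and write $\supp(v) = \{j_1 < \cdots < j_b\}$. Matching the supports of $\sigma(u)$ and $\sigma(v)$ yields indices $1 \le l_1 < \cdots < l_a \le b$ with $i_k + (k-1) = j_{l_k} + (l_k - 1)$, whence $i_k = j_{l_k} + (l_k - k) \ge j_k$ for every $k$. I would then use $t$-spread strong stability of $I$ to lower $u$ coordinate by coordinate: starting from $u^{(0)} = u$ and forming $u^{(k)} = x_{j_1} \cdots x_{j_k} x_{i_{k+1}} \cdots x_{i_a}$, each passage replaces $x_{i_k}$ by $x_{j_k} \le x_{i_k}$; the only nontrivial gap in $u^{(k)}$, namely $i_{k+1} - j_k$, is bounded below by $j_{k+1} - j_k \ge t$ via $i_{k+1} \ge j_{k+1}$, so the intermediate monomial stays $t$-spread and strong stability puts it in $I$. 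After $a$ steps one obtains $u^{(a)} = x_{j_1} \cdots x_{j_a} \in I$; when $a < b$ this is a proper divisor of $v$, contradicting $v \in G(I)$, while $a = b$ forces $\sigma(u) = \sigma(v)$ and hence $u = v$.

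Combining the bijection $\sigma \colon G(I)_j \to G(I^{\sigma})_j$ with the identity $|\set(u)| = |\set(\sigma(u))|$ and Lemma~\ref{set1} then yields $\beta_{i,i+j}(I) = \beta_{i,i+j}(I^{\sigma})$. The delicate point is the bijectivity step, where the identity $i_{k+1} \ge j_{k+1} \ge j_k + t$ is exactly what keeps each intermediate monomial $t$-spread.
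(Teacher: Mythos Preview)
Your proof is correct and rests on the same scaffold as the paper's: both invoke Lemma~\ref{set1} and reduce the theorem to showing $|\set(u)| = |\set(\sigma(u))|$ for every $u \in G(I)$. The difference lies in how that equality is obtained. The paper exhibits an explicit bijection $\set(u)\to\set(\sigma(u))$, sending $a\in\set(u)$ to $a+\ell$ where $\ell=|\{k: i_k<a\}|$, and verifies via (\ref{henning}) that this is well defined and surjective. You instead extract the closed formula $|\set(u)|=\max(u)-1-(d-1)t$ directly from (\ref{henning}) and check that the analogous computation for $\sigma(u)$ (with $t+1$ in place of $t$ and $\max$ shifted by $d-1$) gives the same number. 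Your route is a bit shorter and, as a bonus, it yields Corollary~\ref{formula} immediately without the detour through $I^{\tau^t}$ and Eliahou--Kervaire. You are also more scrupulous on one point the paper treats as evident: the paper simply takes $G(I^\sigma)=\{\sigma(u):u\in G(I)\}$ for granted, whereas you supply the argument that no $\sigma(u)$ can divide a distinct $\sigma(v)$ by using strong stability to push $u$ down to $x_{j_1}\cdots x_{j_a}\in I$, contradicting minimality of $v$.
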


\begin{proof}
Let $u=x_{i_1}x_{i_2}\cdots x_{i_d} \in G(I)$.  Let $\set(u)=\{a_1< \cdots < a_r\}$ and
\[
b_i=a_i+\max\{l \; : \; i_l <a_i\}
\]
for $i=1, \ldots, l$.

 We claim that $b_1 < \cdots < b_r$ and $\set(\sigma(u))=\{b_1, \ldots , b_r\}$. The claim together with Lemma~\ref{set1} yields the desired result.

\medskip
Proof of the claim: Let $k <j$ and $i_l < a_k<i_{l+1}$ and $i_m < a_j<i_{m+1}$. Then $m \geq l$ and $b_j-b_k= a_j+m -(a_k+l)= a_j-a_k+(m-l) >0$.

Next, we show that $b_i \in \set(\sigma(u))$. Indeed, if $a_i \in \set(u)$ and $i_l < a_i < i_{l+1}$, then by (\ref{henning}) we have $a_i -i_l \geq t$. Therefore,  $i_{l}+(l-1) < a_i+l < i_l+(l+1)$ and $a_i+l - (i_l+(l-1)) \geq t+1$. Since, $b_i=a_i+l$, this shows that $b_i \in \set(\sigma(u))$.

Conversely, let $c \in \set(\sigma(u))$. Then from (\ref{henning}), we see that there exists an integer $l$ such that $i_l +(l-1)< c < i_{l+1}+l$ and $c-(i_l+(l-1)) \geq t+1$. This shows that $i_l < c -l< i_{l+1}$ and $(c-l)-i_l \geq t$. Therefore, $c-l =a_i$ for some $i$ and $c=a_i+l=b_i$.
\end{proof}

In general, Theorem~\ref{betti} is not valid for an arbitrary $t$-spread monomial ideal. For example, let $I=(x_1^2, x_2^2)$. Then $I^{\sigma}= (x_1x_2,x_2x_3)$, and $I^{\sigma}$ has a linear resolution while $I$ does not.

\begin{Corollary}\label{formula}
Let $I$ be a $t$-spread strongly stable ideal. Then
\[
\beta_{i,i+j}(I) = \sum_{u \in G(I)_j} \binom{ \max(u)-t(j-1)-1}{i}.
\]
\end{Corollary}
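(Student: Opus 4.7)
The plan is to reduce the claim to the Eliahou--Kervaire formula for (ordinary) strongly stable ideals by transporting $I$ across the bijection $\sigma^{t}$ given in Corollary~\ref{bijection}. The three ingredients lined up exactly for this are: Proposition~\ref{equal} (which carries $t$-spread strong stability through one application of $\sigma$), Theorem~\ref{betti} (which preserves graded Betti numbers under $\sigma$), and the Eliahou--Kervaire formula~\cite{EK} for strongly stable ideals.

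First, I would define $J=\tau^{t}(I)$, i.e.\ the monomial ideal generated by $\tau^{t}(u)$ for $u\in G(I)$, where $\tau$ is the inverse of $\sigma$ from Lemma~\ref{bijection}. By construction $J^{\sigma^{t}}=I$. Applying the ``if and only if'' of Proposition~\ref{equal} in the backward direction $t$ times, starting from the fact that $I$ is $t$-spread strongly stable, one concludes that $J$ is $0$-spread strongly stable, i.e.\ a classical strongly stable ideal. Iterating Theorem~\ref{betti} the same number of times yields
\[
\beta_{i,i+j}(I)\;=\;\beta_{i,i+j}(J).
\]

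Next, the Eliahou--Kervaire formula applied to the strongly stable ideal $J$ gives
\[
\beta_{i,i+j}(J)\;=\;\sum_{v\in G(J)_{j}}\binom{\max(v)-1}{i}.
\]
The remaining task is just to translate $\max(v)$ back to data on $u\in G(I)_{j}$. If $u=x_{i_{1}}x_{i_{2}}\cdots x_{i_{j}}$ with $i_{1}\le i_{2}\le\cdots\le i_{j}$, then unwinding the definition of $\sigma^{t}$ we see that $v=\tau^{t}(u)=\prod_{k=1}^{j}x_{i_{k}-(k-1)t}$, so
\[
\max(v)\;=\;i_{j}-(j-1)t\;=\;\max(u)-t(j-1).
\]
Since $\tau^{t}$ is a bijection between $G(I)_{j}$ and $G(J)_{j}$, substituting into the Eliahou--Kervaire formula gives the asserted expression.

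There is no real obstacle here; the only step that requires a line of computation is the identity $\max(\tau^{t}(u))=\max(u)-t(j-1)$, which is immediate from the definition of $\sigma$. Everything else is a bookkeeping application of results already established in this section.
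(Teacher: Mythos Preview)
Your proof is correct and is essentially identical to the paper's argument: both pass to $I^{\tau^t}$, invoke Proposition~\ref{equal} (implicitly in the paper, explicitly in your write-up) to see it is strongly stable, use Theorem~\ref{betti} iteratively to match Betti numbers, apply the Eliahou--Kervaire formula, and then translate back via $\max(\tau^t(u))=\max(u)-t(j-1)$. The only difference is cosmetic: the paper compresses the iteration of Proposition~\ref{equal} and Theorem~\ref{betti} into single sentences, whereas you spell out the $t$-fold application.
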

\begin{proof}
We know that $I^{\tau^t}$ is strongly stable. From \cite{EK}, we know that
\[
\beta_{i, i+j}(I^{\tau^t})=\sum_{u \in G(I^{\tau^t})_j} \binom{\max(u)-1}{i}.
\]
By Theorem~\ref{betti}, we have $\beta_{i, i+j}(I^{\tau^t})= \beta_{i, i+j}(I)$, therefore,
\[
\beta_{i, i+j}(I)=\sum_{u \in G(I)_j} \binom{\max(\tau^t(u))-1}{i}.
\]
The proof follows, because $\max(\tau^t(u)))= \max(u)-t(\deg(u)-1)$, for all $u \in G(I)$.
\end{proof}

%\begin{Proposition}
%$t$-spread strongly stable ideals have a regular decomposition function with respect to the lexicographic order of the generator.
%\end{Proposition}

\section{$t$--spread Borel generators}

In the theory of stable ideals, Borel generators play an important role. In this section, we introduce the similar concept of $t$-spread strongly stable ideals. %Borel generators.

Let $u_1,\ldots, u_m$ be $t$-spread monomials in $S$. There 
 exists a unique smallest $t$-spread strongly stable ideal containing $u_1,\ldots, u_m$, which we denote by $B_t(u_1, \ldots, u_m)$. The monomials $u_1, \ldots, u_m$ are called the {\em $t$-spread Borel generators} of $B_t(u_1, \ldots, u_m)$.

For example, let $I=B_2(x_2x_4, x_1x_5)$. Then $G(I)=\{x_1x_3, x_1x_4,x_1x_5,x_2x_4\}$.

\begin{Proposition}\label{gen}
Let $I=B_t(u_1, \ldots, u_m).$ Then $I^{\sigma} = B_{t+1}(\sigma(u_1), \ldots, \sigma(u_m))$.
\end{Proposition}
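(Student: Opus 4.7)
The plan is to prove both inclusions using Proposition~\ref{equal} and Lemma~\ref{canonical}, built on a simple preliminary observation that $\sigma$ behaves multiplicatively on precisely the decompositions that Lemma~\ref{canonical} supplies.

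The observation is this: if $u=v\cdot w$ with $\max(v)<\min(w)$, then the indices of $u$ in weakly increasing order are exactly those of $v$ followed by those of $w$, so the position-dependent shifts defining $\sigma$ on the first $\deg(v)$ slots reproduce $\sigma(v)$, and one reads off $\sigma(u)=\sigma(v)\cdot w'$ for some monomial $w'$. In particular $\sigma(v)\mid\sigma(u)$, and the analogous identity $\tau(v)\mid\tau(u)$ holds for the inverse map $\tau$ applied to any $(t+1)$-spread decomposition with $\max(v)<\min(w)$.

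For the inclusion $B_{t+1}(\sigma(u_1),\ldots,\sigma(u_m))\subseteq I^{\sigma}$, note that by Proposition~\ref{equal} the ideal $I^{\sigma}$ is $(t+1)$-spread strongly stable. Each $u_i$ lies in the $t$-spread strongly stable ideal $I$, so Lemma~\ref{canonical} yields $u_i=vw$ with $v\in G(I)$ and $\max(v)<\min(w)$; the observation gives $\sigma(v)\mid\sigma(u_i)$, and since $\sigma(v)$ is among the generators of $I^{\sigma}$, we have $\sigma(u_i)\in I^{\sigma}$. For the reverse inclusion, set $J=B_{t+1}(\sigma(u_1),\ldots,\sigma(u_m))$ and let $J^{\tau}$ be the monomial ideal generated by $\{\tau(v):v\in G(J)\}$. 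Then $(J^{\tau})^{\sigma}=J$, so Proposition~\ref{equal} applied to $J^{\tau}$ forces $J^{\tau}$ to be $t$-spread strongly stable. Applying Lemma~\ref{canonical} inside $J$ to each $(t+1)$-spread monomial $\sigma(u_i)$ produces $\sigma(u_i)=z\cdot w$ with $z\in G(J)$ and $\max(z)<\min(w)$; the $\tau$-observation gives $\tau(z)\mid u_i$, so $u_i\in J^{\tau}$. Minimality of $I=B_t(u_1,\ldots,u_m)$ yields $I\subseteq J^{\tau}$. Finally, for $u\in G(I)$ apply Lemma~\ref{canonical} in $J^{\tau}$ to get $u=v'\cdot w''$ with $v'\in G(J^{\tau})$ and $\max(v')<\min(w'')$; the $\sigma$-observation gives $\sigma(v')\mid\sigma(u)$, and writing $v'=\tau(z)$ with $z\in G(J)$ we have $\sigma(v')=z$, so $z\mid\sigma(u)$ and $\sigma(u)\in J$. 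Hence $I^{\sigma}\subseteq J$.

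The main obstacle is conceptual rather than technical: $\sigma$ is \emph{not} multiplicative in general, so one cannot naively transport the containment $u\in I$ to $\sigma(u)\in I^{\sigma}$ via factorizations. Lemma~\ref{canonical} is exactly what resolves this, by forcing a factorization $u=vw$ with $\max(v)<\min(w)$ along which the position shifts of $\sigma$ line up across the split and $\sigma$ behaves multiplicatively; with this in hand, both directions of the proof are essentially symmetric and reduce to bookkeeping with $\sigma$ and $\tau$.
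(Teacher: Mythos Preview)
Your handling of the inclusion $B_{t+1}(\sigma(u_1),\ldots,\sigma(u_m))\subseteq I^{\sigma}$ matches the paper's: both invoke Proposition~\ref{equal} to see that $I^{\sigma}$ is $(t+1)$-spread strongly stable and then appeal to minimality of $B_{t+1}$. You are in fact more careful than the paper here, which simply asserts $\sigma(u_i)\in I^{\sigma}$ without noting that the Borel generators $u_i$ need not lie in $G(I)$; your use of Lemma~\ref{canonical} closes that small gap.

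For the reverse inclusion $I^{\sigma}\subseteq J$ the two arguments diverge, and yours contains a genuine gap. The paper's proof is a direct coordinatewise check: if $w=x_{j_1}\cdots x_{j_d}\in G(I)$ then $w\in B_t(u_l)$ for some $l$, which means $j_k\le i_k$ for all $k$ (where $u_l=x_{i_1}\cdots x_{i_d}$); adding $k-1$ to both sides gives $\sigma(w)\in B_{t+1}(\sigma(u_l))\subseteq J$ at once. Your route through $J^{\tau}$ instead rests on the assertion $(J^{\tau})^{\sigma}=J$, and this is not automatic. The operators $(-)^{\sigma}$ and $(-)^{\tau}$ are defined by applying $\sigma$, respectively $\tau$, to the \emph{minimal} generating set, and these are not a priori mutually inverse on ideals. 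Concretely, you would need that no proper divisibility $\tau(v_1)\mid\tau(v_2)$ occurs among distinct $v_1,v_2\in G(J)$; otherwise $G(J^{\tau})\subsetneq\{\tau(v):v\in G(J)\}$, and $(J^{\tau})^{\sigma}$ is generated by a proper subset of $G(J)$, hence possibly strictly contained in $J$. Without $(J^{\tau})^{\sigma}=J$ you cannot invoke Proposition~\ref{equal} to conclude that $J^{\tau}$ is $t$-spread strongly stable, and then Lemma~\ref{canonical} is unavailable inside $J^{\tau}$.

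The no-redundancy fact is true---one shows that if $\tau(v_1)\mid\tau(v_2)$ then, writing the exponents out, the first $\deg v_1$ indices of $v_2$ are coordinatewise at most those of $v_1$, so the $(t+1)$-spread strongly stable property of $J$ forces a proper divisor of $v_2$ into $J$---but verifying it is roughly as much work as the paper's direct argument, which avoids the issue entirely.
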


\begin{proof}
Let $w \in G(I)$ and $w=x_{j_1}\cdots x_{j_d}$. Then there exists $u_l=x_{i_1}\cdots x_{i_d}$ such that $j_k \leq i_k$ for all $k=1, \ldots, d$. It gives $j_k + (k-1)\leq i_k+(k-1)$ for all $k=1, \ldots, d$. Therefore, $\sigma(w) \in B_{t+1}(\sigma(u_l)) \subseteq B_{t+1}(\sigma(u_1), \ldots, \sigma(u_m))$. Since $I^{\sigma}$ is generated by elements $\sigma(w)$ with $w \in G(I)$, it shows that $I^{\sigma} \subseteq B_{t+1}(\sigma(u_1), \ldots, \sigma(u_m))$. Furthermore,  $B_{t+1}(\sigma(u_1), \ldots, \sigma(u_m))$ is the smallest $t$-spread strongly stable containing $\sigma(u_1), \ldots, \sigma(u_m)$. Therefore, $B_{t+1}(\sigma(u_1), \ldots, \sigma(u_m)) \subseteq I^{\sigma} $, because $\sigma(u_1), \ldots, \sigma(u_m) \in I^{\sigma}$ and $I^{\sigma}$ is $t$-spread strongly stable .
\end{proof}
We call a $t$-spread strongly stable ideal $I$ {\em $t$-spread principal Borel}, if  there exists a $t$-spread monomial $u \in I$ such that $I=B_t(u)$.

\medskip
Let $u=x_{i_1}\cdots x_{i_d}$. Observe that $x_{j_1}\cdots x_{j_d} \in G(B_t(u))$ if and only if
\begin{enumerate}
\item[(i)] $j_1 \leq i_1, \ldots, j_d \leq i_d$, and
\item[(ii)] $j_k-j_{k-1} \geq t$ for $k=2, \ldots, d$.
\end{enumerate}

\medskip
In what follows, we study an important special class of $t$-spread principal Borel ideals.
\begin{Definition}
Let $d\geq 1$  be an integer. A monomial ideal in $S=K[x_1, \ldots, x_n]$  is called a {\em $t$-spread Veronese  ideal of degree $d$},  if it is generated by all $t$-spread monomials of degree $d$. %Furthermore, we have $$
%G(I_{n,d,t})=\{x_{i_1}x_{i_2}\cdots x_{i_d}\; : \; i_j- i_{j-1}\geq t, 2\leq j \leq n \}.
\end{Definition}

We denote by $I_{n,d,t} \subset S$, the $t$-spread Veronese  ideal in $S$ generated in degree $d$. Note that $I_{n,d,t} \neq (0)$ if and only if $n > t(d-1)$. Observe that the $t$-spread Veronese ideal of degree $d$ is indeed a $t$-spread principal Borel ideal. In fact,
\[
I_{n,d,t}=B_t(\prod_{i=0}^{d-1}x_{n-it}).
\]

Theorem~\ref{gen} implies that
\[
I_{n,d,t}^{\sigma}=I_{n+d-1,d,t+1} \quad \text{and} \quad I_{n,d,t}^{\tau}=I_{n-d+1,d,t-1} \quad \text{if } t \geq 1.
\]

\medskip
Therefore,
\begin{equation}\label{id}
[(x_1,\ldots, x_{n-t(d-1)})^d]^{\sigma^t}= I_{n,d,t}.
\end{equation}

\medskip
There exists a simplicial complex $\Delta$ on the
vertex set $[n]$ such that $I_{n,d,t}$ is the Stanley-Resner ideal of $\Delta.$ We denote by $I_{n,d,t}^\vee$ the Stanley-Reisner ideal of
the Alexander dual of $\Delta.$

\begin{Theorem}\label{big}
 Let $t\geq 1$ be an integer and  $I_{n,d,t} \subset S$  the $t$-spread Veronese  ideal generated in degree $d$. We assume that 
$\bigcup_{u\in G(I_{n,d,t})}\supp(u)=\{x_1,\ldots,x_n\}.$ Then we have the following:
\begin{enumerate}
\item[{\em (a)}] $\height (I_{n,d,t})=n-t(d-1)$.
\item[{\em (b)}] $I_{n,d,t}^\vee$ is generated by the monomials
\[
\prod_{i=1}^{n} x_i/(v_{i_1,t}\cdots v_{i_{d-1},t}) \text{ with } i_{j+1}-i_j \geq t, \text{ for }\quad 1 \leq j \leq d-2,
\]
where $v_{i_k,t}=x_{i_k}x_{{i_k}+1}\cdots x_{{i_k}+t-1}$ for $1 \leq k \leq d-1.$
\item[{\em (c)}] $I_{n,d,t}$ is Cohen-Macaulay and has a linear resolution.
\item[{\em (d)}]  $\beta_i (S/I_{n,d,t})=\binom{d+i-2}{d-1}\binom{n-(t-1)(d-1)}{d+i-1}$ for all $i\geq 1.$ In particular,
$\mu (I_{n,d,t})={\binom{n-(t-1)(d-1)}{d}}.$
\item[{\em (e)}]  $\beta_i (S/I_{n,d,t}^\vee)=\binom{n-t(d-1)+i-1}{i-1}\binom{n-t(d-1)+d}{d-i}$ for all $i\geq 1.$ In particular,
$\mu (I_{n,d,t}^\vee)={\binom{n-t(d-1)+d}{d-1}}.$
\end{enumerate}
\end{Theorem}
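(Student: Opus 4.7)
The strategy is to prove the five parts in sequence, establishing the structural results (a)--(c) first so that they feed into the Betti number computations (d), (e).

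For (a), the prime $(x_1,\ldots,x_{n-t(d-1)})$ contains $I_{n,d,t}$ because any $t$-spread monomial $u=x_{i_1}\cdots x_{i_d}$ satisfies $i_1\le i_d-t(d-1)\le n-t(d-1)$, giving $\height I_{n,d,t}\le n-t(d-1)$. For the reverse inequality, I would show that any subset $S\subseteq[n]$ containing no $t$-spread $d$-tuple has $|S|\le t(d-1)$, via a greedy extraction: running through $S$ in increasing order and pulling out a $t$-spread subsequence consumes at most $t$ elements of $S$ per pick, so $|S|>t(d-1)$ forces the existence of a $t$-spread $d$-subset.

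For (b), the generators of $I_{n,d,t}^{\vee}$ correspond to the inclusion-minimal vertex covers of the $t$-spread $d$-uniform hypergraph. First, each set $V=[n]\setminus\bigsqcup_{k=1}^{d-1}[i_k,i_k+t-1]$ with $i_{k+1}-i_k\ge t$ is a vertex cover by pigeonhole: a $t$-spread $d$-tuple lying in the complement would place two elements in one length-$t$ interval, contradicting the $t$-spread condition. Since $|V|=n-t(d-1)=\height I_{n,d,t}$ by (a), each such $V$ is a minimum, hence inclusion-minimal, vertex cover. For the converse, given an independent set $S$ with $|S|=t(d-1)$, define $f(s)$ as the length of the longest $t$-spread chain in $S$ ending at $s$. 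Independence forces $f:S\to\{1,\ldots,d-1\}$, and within each level set $\{s:f(s)=r\}$ any two elements lie less than $t$ apart (otherwise $f$ would jump). So $|f^{-1}(r)|\le t$, and the equality $\sum_r|f^{-1}(r)|=t(d-1)$ forces each level to be a full interval $[i_r,i_r+t-1]$; the chain condition then yields $i_{r+1}\ge i_r+t$.

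For (c), $I_{n,d,t}$ is $t$-spread strongly stable and equigenerated in degree $d$, so Theorem~\ref{ayesha} gives linear quotients and hence a linear resolution. For the Cohen--Macaulay property, Corollary~\ref{formula} provides $\beta_{i,i+d}(I_{n,d,t})=\sum_u\binom{\max(u)-t(d-1)-1}{i}$, which vanishes for $i>n-t(d-1)-1$ and is nonzero at $i=n-t(d-1)-1$ under the hypothesis that every variable appears (so some generator has $\max(u)=n$). Thus $\projdim(S/I_{n,d,t})=n-t(d-1)=\height I_{n,d,t}$, and Auslander--Buchsbaum gives $\depth=\dim$.

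For (d), I would rewrite Corollary~\ref{formula} by grouping generators according to $m=\max(u)$. The number of generators with $\max=m$ equals the number of $t$-spread $(d-1)$-tuples in $[1,m-t]$, which after the substitution $j_k=i_k-(k-1)t$ equals $\binom{m-(d-1)t+d-2}{d-1}$. Setting $k=m-t(d-1)$ turns $\beta_i(S/I_{n,d,t})$ into $\sum_{k=1}^{n-t(d-1)}\binom{k+d-2}{d-1}\binom{k-1}{i-1}$, which collapses via the elementary identity $\binom{k+d-2}{d-1}\binom{k-1}{i-1}=\binom{i+d-2}{d-1}\binom{k+d-2}{i+d-2}$ followed by the hockey-stick formula to $\binom{d+i-2}{d-1}\binom{n-(t-1)(d-1)}{d+i-1}$. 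For (e), Eagon--Reiner applied to (c) guarantees that $I^{\vee}$ has a linear resolution in degree $n-t(d-1)$; I would order the generators from (b) lex on the tuples $(i_1,\ldots,i_{d-1})$, verify linear quotients directly from the interval-complement description, compute $|\set(g_{\alpha})|$ for each generator, and conclude via Lemma~\ref{set1} and an analogous binomial identity. I expect the converse in part (b) — the level-set rigidity forcing every maximum independent set to be a disjoint union of length-$t$ intervals — together with the linear-quotients verification and binomial bookkeeping for (e) to be the main obstacles.
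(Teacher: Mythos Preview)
Your approach is correct in substance and genuinely different from the paper's. For the Cohen--Macaulay part of (c), the paper proves that $I_{n,d,t}^\vee$ has linear quotients (in the decreasing lex order on the generators from (b)), whence $\Delta$ is shellable and $S/I_{n,d,t}$ is Cohen--Macaulay; you instead read off $\projdim(S/I_{n,d,t})=n-t(d-1)$ from Corollary~\ref{formula} and match it with the height from (a) via Auslander--Buchsbaum, which is shorter and avoids shellability. For (d), the paper invokes the Herzog--K\"uhl formula for Cohen--Macaulay modules with pure resolution, while your direct binomial summation from Corollary~\ref{formula} is exactly the alternative the paper itself flags in its closing remark; your identity and hockey-stick reduction are correct. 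For (e), both routes pass through Eagon--Reiner, but the paper then reapplies Herzog--K\"uhl (with $\height I^\vee=d$ and generators in degree $n-t(d-1)$), which is quicker than the explicit $|\set(\cdot)|$ computation you outline.

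There is one ordering issue to fix. Your converse in (b) characterizes only the \emph{maximum}-cardinality independent sets of $\Delta$, whereas (b) needs all facets; the paper handles this by a direct multi-case argument showing every face of $\Delta$ extends to a set of the prescribed interval-union form, which simultaneously yields purity. In your framework purity follows instead from the Cohen--Macaulayness in (c), and since your proof of (c) uses only (a) and Corollary~\ref{formula} (not (b)), you should establish (c) before the converse direction of (b). With that reorder the argument is complete. (A small point you glossed over in the level-set argument: one should note that $f$ is weakly increasing on $S$, so the level sets are ordered and the disjointness of the length-$t$ intervals forces $i_{r+1}\ge i_r+t$.)
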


\begin{proof}
Let $\Delta$ be the simplicial complex whose Stanley-Reisner ideal is $I_{n,d,t}$ and let $\MF(\Delta)$ the set of facets of $\Delta.$
We prove that every facet of $\Delta$ is of the form
{\small
\[
F=\{j_1,j_1+1,\ldots,j_1+(t-1),j_2,j_2+1,\ldots,j_2+(t-1),\ldots, j_{d-1},j_{d-1}+1,\ldots,j_{d-1}+(t-1)\}
\]}
 for some  $j_1,\ldots,j_{d-1}$  such that $j_\ell-j_{\ell-1}\geq t$ for $2\leq \ell\leq d-1.$

This shows that all the facets of $\Delta$ have the same cardinality, namely $t(d-1),$ thus  $\dim \Delta=t(d-1)-1.$ It follows  that $\dim(S/I_{n,d,t})=t(d-1),$ thus $\height (I_{n,d,t})=n-t(d-1)$ which proves (a). Moreover,  $I_{n,d,t}$ has the primary decomposition
\[I_{n,d,t}=\bigcap_{F\in \MF(\Delta)}P_{[n]\setminus F}\] where $P_{[n]\setminus F}$ is the monomial prime ideal generated by all the variables $x_j$ with $j\in [n]\setminus F.$  By \cite[Corollary 1.5.5]{HHBook}, statement (b) holds.

To begin with, we show that every set
\begin{equation}\label{eq1}
F=\{j_1,\ldots,j_1+(t-1),j_2,\ldots,j_2+(t-1),\ldots, j_{d-1},\ldots,j_{d-1}+(t-1)\}
\end{equation}
 for some  $j_1,\ldots,j_{d-1}$  such that $j_\ell-j_{\ell-1}\geq t$ for $2\leq \ell\leq d-1$ is a facet of $\Delta.$ We have
$F\in \Delta$ since $x_F=\prod_{j\in F}x_j\not\in I_{\Delta}$. On the other hand, we claim that $F\cup\{j\}\not\in \Delta$ for every
$j\in [n]\setminus F.$ This will show that $F$ is indeed a facet of $\Delta.$

Let $j\in [n]\setminus F.$ If $j<j_1,$ we get
\[x_jx_{j_1+(t-1)}\cdots x_{j_{d-1}+(t-1)}\in I_{\Delta},\]
thus $\{j, j_1+(t-1),\ldots, j_{d-1}+(t-1)\}$ is a non-face of $\Delta,$ which implies that $F\cup\{j\}\not\in \Delta.$ If
$j\geq j_{d-1}+t,$ we get the non-face $\{j_1,\ldots,j_{d-1},j\}$, thus $F\cup\{j\}\not\in \Delta.$ Finally, if there exists
$2\leq\ell\leq d-1$ such that $j_{\ell-1}+(t-1)<j<j_\ell,$ then $\{j_1,\ldots,j_{\ell-1},j,j_{\ell}+(t-1),\ldots, j_{d-1}+(t-1)\}$ is a non-face of $\Delta.$ Consequently, $F\cup\{j\}\not\in \Delta.$

Therefore, we have shown that every set $F$ as in (\ref{eq1}) is a facet of $\Delta.$

Our purpose is to show that the sets of the form  (\ref{eq1})  are the only facets. This is equivalent to showing that for every face $G\in \Delta$, there exists
$F\in \MF(\Delta)$ of the form (\ref{eq1}) which contains $G.$

Let $G\in \Delta$ and $i_1=\min G.$ Inductively, for  $\ell\geq 2,$ we set \[i_\ell=\min\{i\in G: i\geq i_{\ell-1}+t\}.\]
The sequence $i_1<i_2<\cdots$ has at most $d-1$ elements. Otherwise, $G\supseteq \{i_1,\ldots,i_d\}$ with
$i_\ell\geq i_{\ell-1}+t$ for $2\leq \ell\leq d.$ But $\{i_1,\ldots,i_d\}\not\in \Delta$ since
$x_{i_1}\cdots x_{i_d}\in I_{\Delta}.$ Thus $G\not\in\Delta,$ a contradiction. Therefore, $G$ has the form
\[G=\{i_1,i_1+1,\ldots,i_1+q_1,\ldots, i_k,i_k+1,\ldots,i_k+q_k\}\]
for some $k\leq d-1, 0\leq q_1,\ldots,q_k\leq t-1,$ and $i_\ell\geq i_{\ell-1}+t$ for $2\leq \ell\leq k.$ Obviously,
$G\subseteq G^\prime$ where
\[G^\prime=\{i_1,i_1+1,\ldots,i_1+(t-1),\ldots,i_{k-1},i_{k-1}+1,\ldots,i_{k-1}+(t-1),i_k,\ldots,i_k+q\}\]
where we set $q=q_k.$

\textbf{Claim}. For $k\leq d-2,$ there exists $H\in \Delta,$ $H\supset G^\prime\supset G,$ with
\[H=\{i^\prime_1,\ldots,i^\prime_1+(t-1),\ldots,i^\prime_{k},i^\prime_{k}+1,\ldots,i^\prime_{k}+(t-1),i^\prime_{k+1},\ldots, i^\prime_{k+1}+q^\prime\}
\] for some $0\leq q^\prime\leq t-1,$ $i^\prime_1\leq t,$ and $i^\prime_{\ell}\geq i^\prime_{\ell-1}+t$ for $2\leq \ell\leq k+1.$

\emph{Proof of the Claim.} If $i_1=\min G^\prime\geq t+1,$ then $G\subset G^\prime\subset H=\{1,\ldots,t\}\cup G^\prime\in \Delta$ and the claim follows.

Let now $i_1\leq t$ and assume for the beginning that $i_\ell=i_{\ell-1}+t$ for $2\leq \ell\leq k.$ Then
\[i_k=i_1+(k-1)t\leq kt\leq (d-2)t\leq n-t-1.\] In the last inequality we used the  condition $n\geq 1+(d-1)t$ which must be satisfied by
$n.$ Then we get $i_k+t\leq n-1$ hence we may take
\[H=\{i_1,i_1+1,\ldots,i_1+(t-1),\ldots,i_{k},i_{k}+1,\ldots,i_{k}+(t-1),i_{k+1}=i_k+t\}.\]
To complete the proof of the Claim, we need to consider a last case, namely when there exists $\nu$ such that $i_\nu>i_{\nu-1}+t.$
Let $\ell=\max\{\nu: i_\nu>i_{\nu-1}+t\}.$ Then it follows that $i_k>i_{\ell-1}+(k-\ell+1)t$ and we may take
\[H=\{i_1,\ldots,i_1+(t-1),\ldots,i_{\ell-1},\ldots,i_{\ell-1}+(t-1),i^\prime_\ell,\ldots, i^\prime_\ell+(t-1),\ldots,\]
\[i^\prime_k,\ldots, i^\prime_k+(t-1),i^\prime_{k+1}\ldots,i^\prime_{k+1}+s^\prime\}\]
for some $s^\prime\geq 0,$ where $i^\prime_\ell=i_{\ell-1}+t,i^\prime_{\ell+1}=i_{\ell-1}+2t,\ldots, i^\prime_{k+1}=i_{\ell-1}+(k-\ell+1)t.$

By our Claim, it is now clear that every face $G\in \Delta$ is contained in a larger face $H$ of the form
\begin{equation}\label{eq2}
H=\{i_1,\ldots,i_1+(t-1),\ldots,i_{d-2},\ldots,i_{d-2}+(t-1),i_{d-1},\ldots,i_{d-1}+s\}
\end{equation}
for some $0\leq s\leq t-1,$  where $i_1\leq t, $ and $i_\ell\geq i_{\ell-1}+t$ for $2\leq \ell\leq d-1.$

It remains to show that there exists  $F\in \MF(\Delta)$ which contains $H.$ But this follows if we show that for every $s\leq t-2,$
$H$ is contained in a face of $\Delta$ of the form
\[\{i^\prime_1,\ldots,i^\prime_1+(t-1),\ldots,i^\prime_{d-2},\ldots,i^\prime_{d-2}+(t-1),i^\prime_{d-1},\ldots, i^\prime_{d-1}+(s+1)\}.\]

Let $s\leq t-2.$ Of course, if $i_{d-1}+s<n,$ then we may get the larger face immediately, just by adding to $H$ the vertex $i_{d-1}+(s+1).$
Let $i_{d-1}+s=n.$ If $i_\ell=i_{\ell-1}+t$ for all $2\leq \ell\leq d-1,$ then $i_{d-1}=i_1+(d-2)t,$ thus $i_1+(d-2)t+s=n$ which
implies that
\[i_1=n-(d-2)t-s\geq 1+(d-1)t-(d-2)t-s\geq 3.\] Then, we can take
\[H\subset \{i^\prime_1,\ldots,i^\prime_1+(t-1),\ldots,i^\prime_{d-2},\ldots,i^\prime_{d-2}+(t-1),i^\prime_{d-1},\ldots, i^\prime_{d-1}+(s+1)\}\] where $i^\prime_1=i_1-1,i^\prime_2=i_2-1,\ldots,i^\prime_k=i_k-1.$

Finally, let us choose the maximal $\ell$ such that $i_\ell>i_{\ell-1}+t.$ %Then we get
%\[i_{d-1}=i_{\ell}+(d-\ell-1)t>i_{\ell-1}+(d-\ell)t.\]
In this case, we take
\[H\subset \{i^\prime_1,\ldots,i^\prime_1+(t-1),\ldots,i^\prime_{d-2},\ldots,i^\prime_{d-2}+(t-1),i^\prime_{d-1},\ldots, i^\prime_{d-1}+(s+1)\}\] with $i^\prime_1=i_1,\ldots, i^\prime_{\ell-1}=i_{\ell-1}, i^\prime_\ell=i_{\ell}-1, i^\prime_{\ell+1}=i_{\ell+1}-1,\ldots,
i^\prime_{d-1}=i_{d-1}-1.$
\medskip

In order to prove
 that $I_{n,d,t}$ has a linear resolution, it is enough to apply Theorem~\ref{ayesha}. Since $I_{n,d,t}$ is generated in a single degree, it follows that it has a linear resolution.

Next, we show that $I_{n,d,t}^\vee$ has linear quotients. Then, by \cite[Proposition 8.2.5]{HHBook}, it follows that the simplicial complex
$\Delta$ is shellable, thus, by \cite[Theorem 8.2.6]{HHBook}, $I_\Delta=I_{n,d,t}$ is Cohen-Macaulay.

Let $w_1,\ldots,w_q$ be the minimal monomial generators of $I_{n,d,t}^\vee$ ordered decreasingly with respect to the lexicographic order. Let
\[w_i=\prod_{i=1}^{n} x_i/(v_{i_1}\cdots v_{i_{d-1}}) \text{ and } w_j=\prod_{i=1}^{n} x_i/(v_{j_1}\cdots v_{j_{d-1}})\]
with $i\neq j.$ In order to simplify a little the notation, we removed the index $t$ in $v_{j_k,t}$ and $v_{i_k,t}.$ A simple calculation shows that
\[ \frac{w_i}{\gcd(w_i,w_j)}=\frac{v_{j_1}\cdots v_{j_{d-1}}}{\gcd(v_{i_1}\cdots v_{i_{d-1}},v_{j_1}\cdots v_{j_{d-1}})}.\]

Let $i<j.$ Then $w_i>_{\lex} w_j,$ that is, $v_{j_1}\cdots v_{j_{d-1}}>_{\lex} v_{i_1}\cdots v_{i_{d-1}}$ which is equivalent to the condition
that there exists an integer $s\geq 1$ such that $j_1=i_1,\ldots,j_{s-1}=i_{s-1}$ and $j_s<i_s.$

We first observe that $x_{j_s}\mid (w_i/\gcd(w_i,w_j))$  since $x_{j_s}\mid v_{j_1}\cdots v_{j_{d-1}}$ and it does not divide the product
$v_{i_1}\cdots v_{i_{d-1}}$ because $i_s>j_s.$

Let us assume that there exists a  least integer $\ell\leq d-2$ such that $j_{\ell+1}>j_\ell+t.$
Let  \[w_k=\prod_{i=1}^{n} x_i/(v_{j_1}\cdots v_{j_{s-1}}v_{j_s+1}v_{j_s+2}\cdots v_{j_{\ell}+1}v_{j_{\ell+1}}
\cdots v_{j_{d-1}}).\]

Obviously, $w_k>_{\lex} w_j$, thus $k<j,$  and we claim that $w_k/\gcd(w_k,w_j)=x_{j_s.}$ An easy calculation shows
that
\[\gcd(v_{j_1}\cdots v_{j_{s-1}}v_{j_s+1}v_{j_s+2}\cdots v_{j_{\ell}+1}v_{j_{\ell+1}}\cdots v_{j_{d-1}},v_{j_1}\cdots v_{j_{d-1}} )=
\frac{v_{j_1}\cdots v_{j_{d-1}}}{x_{j_s}}.\] Then,
\[\frac{w_k}{\gcd(w_k,w_j)}=\frac{v_{j_1}\cdots v_{j_{d-1}}}{((v_{j_1}\cdots v_{j_{d-1}})/x_{j_s})}=x_{j_s}.\]

If $j_{\ell+1}=j_\ell+t$ for $2\leq \ell\leq d-2,$ we get \[j_{d-1}=j_s+(d-s-1)t<i_s+(d-s-1)t\leq i_{d-1}\leq n-t+1.\] Thus
$j_{d-1}+(t-1)\leq n$, and we may consider the monomial
$v_{j_{d-1}+1}$.
In this case we  take \[w_k=\prod_{i=1}^{n} x_i/(v_{j_1}\cdots v_{j_{s-1}}v_{j_s+1}v_{j_s+2}\cdots v_{j_{d-1}+1})\] and check that
$w_k/\gcd(w_k,w_j)=x_{j_s}$.

Finally, for the calculation of the Betti numbers of $I_{n,d,t}$ and $I_{n,d,t}^\vee$, we employ \cite[Theorem 4.5]{BH98} which gives
the Betti numbers of a Cohen-Macaulay  ideal $I$ in a polynomial ring $R$ with pure resolution of type $(d_1,\ldots,d_p)$. We have
\[\beta_i(R/I)=(-1)^{i+1}\prod_{j\neq i} \frac{d_j}{d_j-d_i}, i\geq 1.\] In our case, the type of the resolution of $S/I_{n,d,t}$ is given by
$d_j=d+j-1$ for $1\leq j\leq p=n-t(d-1).$ Therefore,
\[\beta_i(S/I_{n,d,t})=(-1)^{i+1}\prod_{j=1}^{i-1}\frac{d+j-1}{j-i}\prod_{j=i+1}^p\frac{d+j-1}{j-i}=\] \[
=\frac{d(d+1)\cdots (d+i-2)}{(i-1)!}\cdot \frac{(d+i)(d+i+1)\cdots (d+p-1)}{(p-i)!}=\] \[=\binom{d+i-2}{d-1}\binom{n-(d-1)(t-1)}{d+i-1}.\]

By Eagon-Reiner Theorem \cite[Theorem 8.1.9]{HHBook}, it follows that $I_{n,d,t}^\vee$ is also Cohen-Macaulay and has a linear resolution. Thus, we may compute the Betti numbers of
$S/I_{n,d,t}^\vee$ as
we did for $S/I_{n,d,t}.$ Note that, in this case, we have $\height(I_{n,d,t}^\vee)=\projdim (S/I_{n,d,t}^\vee)=d$, and the degree of the generators of $I_{n,d,t}^\vee$ is equal to the height of $I_{n,d,t}.$ We omit the remaining part of the calculation of Betti numbers since it is completely similar to the above part of the proof.

We end the proof with the following remark. One may get an alternative proof of part  (d) by using (\ref{id}) and Theorem~\ref{betti}.
 \end{proof}

As an application of Theorem~\ref{big}, we prove the following

\begin{Theorem}\label{height}
Let $I$ be $t$-spread strongly stable ideal. Then
\[
\height(I)=\max \{\min(u): u \in G(I)\} .
\]
\end{Theorem}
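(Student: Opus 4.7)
\medskip
Set $m=\max\{\min(u)\:u\in G(I)\}$. I would prove the two inequalities separately. The upper bound $\height(I)\leq m$ is essentially immediate: every minimal generator $u\in G(I)$ satisfies $\min(u)\leq m$, so $x_{\min(u)}\mid u$ and $u\in(x_1,\ldots,x_m)$. Hence $I\subseteq(x_1,\ldots,x_m)$, which forces $\height(I)\leq m$.

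For the lower bound $\height(I)\geq m$, since $I$ is a monomial ideal it suffices to show that every monomial prime $P_A=(x_i\:i\in A)$ containing $I$ satisfies $|A|\geq m$. Suppose on the contrary that $I\subseteq P_A$ for some $A$ with $|A|\leq m-1$. Pick $u^*\in G(I)$ with $\min(u^*)=m$; write $u^*=x_{k_1}x_{k_2}\cdots x_{k_d}$ with $k_1=m$. Since $u^*$ is $t$-spread one has $k_j\geq k_1+(j-1)t=m+(j-1)t$ for all $j$. The plan is to construct, greedily, a $t$-spread monomial $v=x_{i_1}x_{i_2}\cdots x_{i_d}$ with $i_j\leq k_j$ and $\supp(v)\cap A=\emptyset$, which then lies in $B_t(u^*)\subseteq I$ by the characterization of generators of a $t$-spread principal Borel ideal recalled in the paper, contradicting $I\subseteq P_A$.

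The greedy rule is: choose $i_1=\min([m]\setminus A)$, and for $j\geq 2$ choose
\[
i_j=\min\bigl([\,i_{j-1}+t,\;m+(j-1)t\,]\setminus A\bigr).
\]
The main step is to show this is well defined at every stage. Suppose the choice fails at step $j$. Then $[i_{j-1}+t,\,m+(j-1)t]\subseteq A$, while from the earlier steps we also have $[1,i_1-1]\subseteq A$ and $[i_{l-1}+t,\,i_l-1]\subseteq A$ for $l=2,\ldots,j-1$. These intervals are pairwise disjoint, and a routine telescoping calculation shows the sum of their cardinalities equals
\[
(i_1-1)+\sum_{l=2}^{j-1}(i_l-i_{l-1}-t)+\bigl(m+(j-1)t-i_{j-1}-t+1\bigr)=m,
\]
contradicting $|A|\leq m-1$. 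Thus the construction succeeds; by induction, all $i_j$ satisfy $i_j\leq m+(j-1)t\leq k_j$, the monomial $v$ is $t$-spread with $i_j\leq k_j$, hence $v\in B_t(u^*)\subseteq I$, yet $\supp(v)\cap A=\emptyset$, a contradiction.

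The genuinely non-trivial step is the telescoping count: one must identify exactly the intervals forced into $A$ by the greedy choice and verify that the threshold $m+(j-1)t$ (rather than something larger coming from $k_j$) is enough to make the count reach $m$. The role of the bound $k_j\geq m+(j-1)t$, which follows from $u^*$ being $t$-spread with $\min(u^*)=m$, is precisely to guarantee that the greedy monomial produced in $\{1,\ldots,m+(j-1)t\}$ still divides the Borel data of $u^*$ and thus lies in $I$.
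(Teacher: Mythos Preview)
Your proof is correct, and your upper bound matches the paper's verbatim, but for the lower bound you take a genuinely different route. The paper reduces to the Veronese case: since $I$ is $t$-spread strongly stable and contains $u^*=x_{k_1}\cdots x_{k_d}$ with $k_1=m$, it also contains $u'_0=x_m x_{m+t}\cdots x_{m+(d-1)t}$, hence $I\supseteq B_t(u'_0)=I_{m+(d-1)t,\,d,\,t}$, and then Theorem~\ref{big}(a) supplies $\height(I_{m+(d-1)t,\,d,\,t})=m$. Your greedy construction instead shows directly, without invoking Theorem~\ref{big}, that no monomial prime $P_A$ with $|A|<m$ can contain $B_t(u^*)$; the telescoping count is clean and the bound $k_j\geq m+(j-1)t$ is used exactly where you say. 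Your argument is more elementary and self-contained---it bypasses the rather lengthy facet analysis of the Stanley--Reisner complex that underlies Theorem~\ref{big}(a)---while the paper's route is a two-line reduction once that machinery is available and makes the role of the Veronese ideal transparent. One small remark: the failure case in your argument tacitly assumes $j\geq 2$ (it refers to $i_{j-1}$), but step~$1$ can never fail since $|A|\leq m-1<|[m]|$, so this is harmless.
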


\begin{proof}
Let $u_0\in G(I)$ such that $\min(u_0)=\max \{\min(u): u \in G(I)\}$, and let $P=(x_i : i \leq \min(u_0))$. Then $I \subset P$, because  for all $ w \in G(I)$ one has $\min(w) \leq \min(u_0)$.  This shows that $\height(I) \leq \min(u_0)$.

Conversely, let $u_0=x_{i_1}\cdots x_{i_d}$. Then $u'_0= x_{i_1} x_{i_1+t}\cdots x_{i_1+t(d-1)}$ belongs to $I$ because $I$ is $t$-spread strongly stable. Let $I'=B_t(u'_0)$. Then $I' \subset I$ and Theorem~\ref{big} implies that
\[
\height(I) \geq \height(I') = i_1+t(d-1)-t(d-1)=i_1=\min(u'_0)=\min(u_0).
\]
\end{proof}

\begin{Corollary}
\label{cmclassified}
Let $I \subset  S=K[x_1, \ldots, x_n]$ be a $t$ spread strongly stable ideal such that $\bigcup_{u \in G(I)}\supp(u)=\{x_1, \ldots, x_n\}.$ Then $S/I$ is Cohen-Macaulay if and only if there exists $u \in G(I)$ of degree $d$ such that $u=x_{n+t(d-1)}\cdots x_{n-t} x_n $.

In particular, if $I$ is generated in a single degree then $S/I$ is Cohen-Macaulay if and only if $I$ is $t$-spread Veronese.
\end{Corollary}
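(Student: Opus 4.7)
The plan is to translate Cohen--Macaulayness into an arithmetic comparison produced by the two main results of the section. By Theorem~\ref{height} one has $\height(I)=M$ where $M=\max_{v\in G(I)}\min(v)$, and by Corollary~\ref{formula} one has $\projdim(S/I)=P$ where $P=\max_{v\in G(I)}\bigl(\max(v)-t(\deg(v)-1)\bigr)$. For any $t$-spread monomial $v=x_{i_1}\cdots x_{i_d}$ the inequality $i_d-i_1\ge t(d-1)$ yields $\max(v)-t(\deg(v)-1)\ge\min(v)$, hence $P\ge M$, with equality contribution at $v$ exactly when $v$ is a \emph{tight} arithmetic progression $v=x_j x_{j+t}\cdots x_{j+t(\deg v-1)}$. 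Therefore $S/I$ is Cohen--Macaulay iff $P=M$, and the corollary becomes the structural question: when does $I$ admit a tight-AP generator reaching all the way to $x_n$?

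For the forward direction, suppose $P=M$ and pick $v^*\in G(I)$ with $\min(v^*)=M$. Then the chain $M=\min(v^*)\le\max(v^*)-t(\deg(v^*)-1)\le P=M$ forces equality throughout, so $v^*$ is a tight AP $v^*=x_M x_{M+t}\cdots x_{M+t(d^*-1)}$ with $d^*=\deg v^*$. The remaining and real content is to produce such an AP generator that reaches $x_n$. Using the hypothesis $\bigcup_{v\in G(I)}\supp(v)=\{x_1,\ldots,x_n\}$, pick $w\in G(I)$ with $\max(w)=n$; the Cohen--Macaulay inequality $\max(w)-t(\deg(w)-1)\le M$ together with strongly-stable moves on $w$ lets one produce a generator of the form $x_M x_{M+t}\cdots x_n$, which is exactly the desired $u=x_{n-t(d-1)}\cdots x_{n-t}x_n$ with $d=d^*$.

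For the reverse direction, assume $u=x_{n-t(d-1)}\cdots x_n\in G(I)$. Since $u$ is the coordinate-wise maximal $t$-spread monomial of degree $d$ in $K[x_1,\ldots,x_n]$, its $t$-spread Borel closure is the Veronese $B_t(u)=I_{n,d,t}$, so strong stability of $I$ yields $I_{n,d,t}\subseteq I$. In particular $\height(I)\ge\height(I_{n,d,t})=n-t(d-1)$ by Theorem~\ref{big}(a), and $P\ge\max(u)-t(d-1)=n-t(d-1)$. Minimality of $u$ is then used to drive the matching upper bounds $M\le n-t(d-1)$ and $P\le n-t(d-1)$: if some $v\in G(I)$ violated either bound, repeated strongly-stable moves combined with Lemma~\ref{canonical} would produce a $t$-spread monomial properly dividing $u$ yet still lying in $I$, contradicting $u\in G(I)$. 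Combined with the lower bounds from $u$, this gives $M=P=n-t(d-1)$, so $S/I$ is Cohen--Macaulay.

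The ``in particular'' statement falls out immediately: when $I$ is equigenerated in degree $d$, the generator produced in the forward direction necessarily has degree $d$, so $I\supseteq I_{n,d,t}$; conversely every generator of $I$ is a $t$-spread monomial of degree $d$ and $I_{n,d,t}$ contains all such monomials, whence $I=I_{n,d,t}$ (and the reverse implication is Theorem~\ref{big}(c)). The main technical obstacle is the uniform upper bound on $P$ in the reverse direction: extracting the inequality $\max(v)-t(\deg(v)-1)\le n-t(d-1)$ for every $v\in G(I)$ from the single hypothesis $u\in G(I)$ is the delicate step, and it is precisely there that the strongly stable closure, Lemma~\ref{canonical}, and the Veronese inclusion $I\supseteq I_{n,d,t}$ must be combined carefully.
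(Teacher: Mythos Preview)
Your forward direction mirrors the paper's: both compute $M=\height(I)=\max_{v\in G(I)}\min(v)$ via Theorem~\ref{height} and $P=\projdim(S/I)=\max_{v\in G(I)}\bigl(\max(v)-t(\deg(v)-1)\bigr)$ via Corollary~\ref{formula}, note that $P\ge M$ always, and from $P=M$ deduce that any generator $u_0$ realizing $M$ is a tight arithmetic progression; the support hypothesis together with a case split on degree (which you only sketch but the paper carries out in detail) then forces $\max(u_0)=n$.

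The reverse direction, however, has a genuine gap---the step you yourself flag as ``delicate'' cannot be completed, because the implication is in fact false as stated. Take $t=1$, $n=5$, and $I=B_1(x_3x_4x_5,\,x_2x_5)\subset K[x_1,\ldots,x_5]$. Then $G(I)=\{x_ax_b:1\le a\le 2,\ a<b\le 5\}\cup\{x_3x_4x_5\}$, so $u=x_3x_4x_5\in G(I)$ has the required form with $d=3$ and $n-t(d-1)=3$, yet $v=x_2x_5\in G(I)$ gives $\max(v)-t(\deg(v)-1)=4$, whence $P=4>3=M$ and $S/I$ is not Cohen--Macaulay. Your proposed mechanism---producing from such a $v$ a proper divisor of $u$ lying in $I$ via strongly-stable moves and Lemma~\ref{canonical}---cannot succeed here: those moves only \emph{lower} indices, while every divisor of $u$ has all indices $\ge 3$ and $2\in\supp(v)$. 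The paper's argument shares exactly this gap: its asserted biconditional ``equality~(\ref{equ}) holds if and only if $\min(u_0)=\max(u_0)-t(\deg(u_0)-1)$'' is only valid left-to-right. The equigenerated ``in particular'' does survive, since its converse is supplied directly by Theorem~\ref{big}(c).
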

\begin{proof}
From Corollary~\ref{formula}, it follows that
\[
\pd(S/I)=\max \{ \max(u)-t(\deg(u)-1) \; : u \in G(I)  \},
\]
and from Theorem~\ref{height}, it follows that
\[
\dim(S/I)=n- \max\{\min(u) \; : u \in G(I)  \}.
\]
By using Auslander-Buchsbaum theorem we conclude that $S/I$ is Cohen-Macaulay if and only if
\begin{equation}\label{equ}
\max \{ \max(u)-t(\deg(u)-1) \; : u \in G(I)  \}=\max\{\min(u) \; : u \in G(I)  \}.
\end{equation}
Let $u_0 \in G(I) $ with $\min(u_0)=\max\{\min(u) \; : u \in G(I)  \}$. Since \[\min(u) \leq  \max(u)-t(\deg(u)-1)\] for all $ u \in G(I),$ equality  (\ref{equ}) holds  if  and only if
\[\min(u_0)=  \max(u_0)-t(\deg(u_0)-1).\] %In other words, $S/I$ is Cohen-Macaulay if and only if $u_0= x_{i_1}x_{i_1+t}\ldots x_{i_1+d(t-1)} $.

In other words, $S/I$ is Cohen-Macaulay if and only if there exists $u_0 \in G(I)$ with
\[
\min(u_0)=\max\{\min(u)\; : u \in G(I)\} \text{ and  } u_0=x_{i_1}x_{i_1+t}\cdots x_{i_1+t(d-1)}
\]
Since $\bigcup_{u \in G(I)}\supp(u)=\{x_1, \ldots, x_n\}$, there exists $u \in G(I)$ such that $\max(u)=n$ and $\min (u) \leq \min(u_0)$. Note that
\[
\max(u) -t(\deg(u)-1) \leq i_1= \min(u_0).
\]
Therefore, this implies that $n \leq i_1+t(\deg(u)-1)$. Suppose that $\deg (u) \leq \deg(u_0)=d$, then it follows that $n =i_1+t(d-1)$, as required. On the other hand, if $\deg (u)> d$, then $u=x_{j_1}\cdots x_{j_d}x_{j_{d+1}}\cdots x_n$ with $j_1 \leq j_2 \leq \cdots \leq n.$  Let $u'=x_{j_1}\cdots x_{j_d}$. Since $u \in G(I)$, we have $j_d > i_1+t(d-1)$, otherwise, $j_k \leq i_k$ for all $1 \leq k \leq d$. Then, since $I$ is $t$-strongly stable, we obtain $u' \in I$ and $u'|u$, a contradiction.

Since $j_d > i_1+t(d-1)$, we get
\[
\max(u)-t(\deg(u)-1) \geq \max(u')-t(\deg(u')-1) > i_1,
\]
a contradiction.
\end{proof}

\section{$t$--spread principal Borel algebras}\label{alg}

Let $t\geq 1$ and $u \in S$ be a $t$-spread monomial. In this section, we consider  the toric algebra $K[B_t(u)]$ which is generated by the monomials $v$ with $v \in G(B_t(u))$. If $u=x_{n-t(d-1)}\cdots x_{n-t}x_n$, then $B_t(u)=I_{n,d,t}$ and in this case $K[B_t(u)]$ is called a {\em $t$-spread Veronese algebra}.

%\begin{Definition}
%Let $t\geq 1$ and $\MM_{n,d,t}$ the set of all $t$--spread monomial ideals of degree $d$. The $K$--algebra $V_{n,d,t}=K[\MM_{n,d,t}]$ is called a \emph{$t$--spread Veronese algebra}. Clearly, for $t=1,$ $V_{n,d,t}$ is just the squarefree Veronese algebra.
%\end{Definition}

%It is well known that the defining ideal of the squarefree Veronese algebra has a quadratic Gr\"obner basis. We will show that $t$--spread Veronese algebras share the same property.

Let us first recall the notion of sortable sets of monomials. For more information we refer to \cite[Section 6.2]{EHBook}.
Let $u,v$ two monomials of degree $d.$ We write $uv=x_{i_1}x_{i_2}\cdots x_{i_{2d}}$ with $1\leq i_1\leq i_2\leq \cdots\leq i_{2d}$, and
consider the monomials $u^\prime=x_{i_1}x_{i_3}\cdots x_{i_{2d-1}}, v^\prime =x_{i_{2}}x_{i_4}\cdots x_{i_{2d}}.$ The pair
$(u^\prime,v^\prime)$ is called the \emph{sorting} of $(u,v).$ We write $(u^\prime,v^\prime)=\sort(u,v).$  A subset $\MS\subset S_d$ is called \emph{sortable} if $\sort(u,v)\in \MS\times \MS$ for all $(u,v)\in \MS\times \MS.$

\begin{Proposition}\label{sort}
The set $G(B_t(u))$  is sortable.
\end{Proposition}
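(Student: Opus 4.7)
The plan is to use the explicit description of $G(B_t(u))$ given in Section~2: for $u=x_{i_1}\cdots x_{i_d}$, a monomial $x_{j_1}\cdots x_{j_d}$ lies in $G(B_t(u))$ precisely when it is $t$-spread and $j_k\le i_k$ for every $k$. So, given $v_1=x_{a_1}\cdots x_{a_d}$ and $v_2=x_{b_1}\cdots x_{b_d}$ in $G(B_t(u))$, and writing $v_1v_2=x_{c_1}\cdots x_{c_{2d}}$ with $c_1\le\cdots\le c_{2d}$, I need to verify that both $v_1'=x_{c_1}x_{c_3}\cdots x_{c_{2d-1}}$ and $v_2'=x_{c_2}x_{c_4}\cdots x_{c_{2d}}$ are $t$-spread and satisfy the componentwise bound with respect to $u$.

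The componentwise bound is a quick counting step. Since $v_1,v_2\le u$ componentwise, the $2k$ values $a_1,\ldots,a_k,b_1,\ldots,b_k$ are each $\le i_k$, so at least $2k$ of the sorted $c_j$ are $\le i_k$. In particular $c_{2k-1}\le i_k$ (the bound needed for $v_1'$) and $c_{2k}\le i_k$ (the bound needed for $v_2'$).

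The $t$-spread property for $v_1'$ and $v_2'$ is the heart of the proof, and I plan to establish it by a pigeonhole contradiction. For $v_1'$ I need $c_{2k+1}-c_{2k-1}\ge t$; if instead $c_{2k+1}-c_{2k-1}<t$, then the three consecutive sorted values $c_{2k-1},c_{2k},c_{2k+1}$ all lie in an interval of fewer than $t$ consecutive integers. By pigeonhole, at least two of them come from the same monomial, say $v_1$, yielding indices $p<q$ with $a_p$ and $a_q$ both in that interval and hence $a_q-a_p<t$; but $v_1$ being $t$-spread gives $a_q-a_p\ge(q-p)t\ge t$, a contradiction. The identical argument applied to the triple $c_{2k},c_{2k+1},c_{2k+2}$ yields $c_{2k+2}-c_{2k}\ge t$, which is the $t$-spread condition for $v_2'$. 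The only real obstacle is spotting the right three-in-a-$t$-window pigeonhole; once that is in place both verifications are immediate.
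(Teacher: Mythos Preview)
Your proposal is correct and follows essentially the same approach as the paper. The paper cites De Negri's lemma for the componentwise bound $c_{2k-1},c_{2k}\le i_k$ (which you prove directly by the same counting argument), and for the $t$-spread condition the paper does an explicit case split on which of the three factors $c_{2\ell-1},c_{2\ell},c_{2\ell+1}$ come from which monomial---this is exactly your pigeonhole argument unrolled into cases.
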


\begin{proof}
Let $u=x_{i_1}\cdots x_{i_d}$ with $i_1 \leq \cdots \leq i_d$. Let $w,v\in G(B_t(u))$ and write $wv=x_{j_1}x_{j_2}\cdots x_{j_{2d}}.$ Then $w^\prime=x_{j_1}x_{j_3}\cdots x_{j_{2d-1}}, v^\prime =x_{j_{2}}x_{j_4}\cdots x_{j_{2d}}$. By \cite[Lemma 2.7]{N}, we have $j_{2k}, j_{2k-1} \leq i_k$ for $k=1 \ldots, d$.

It remains to be shown that $j_{2\ell+1}-j_{2\ell-1}\geq t$ and $j_{2\ell+2}-j_{2\ell}\geq t$ for all $1\leq \ell\leq d-1.$ We prove only the first inequality since the second one may be proved in a similar way.
If $x_{j_{2\ell-1}},x_{j_{2\ell+1}}$ divide the same monomial, say $w,$ then the inequality holds since $w\in G(B_t(u)).$ Else, we may consider
that $x_{j_{2\ell-1}}\mid w$ and $x_{j_{2\ell+1}}\mid v.$ If $x_{j_{2\ell}}\mid w,$ then $j_{2\ell+1}-j_{2\ell-1}\geq j_{2\ell}-j_{2\ell-1}\geq t,$
 since $w\in G(B_t(u)).$ If $x_{j_{2\ell}}\mid v,$ then $j_{2\ell+1}-j_{2\ell-1}\geq j_{2\ell+1}-j_{2\ell}\geq t$ since $v\in G(B_t(u))$.
\end{proof}

Let $R$ be the polynomial ring $K[t_v\; | \; v \in G(B_t(u))]$, and $\phi: R \rightarrow K[B_t(u)]$ be the $K$-algebra homomorphism which maps $t_v$ to $v$ for all $v \in G(B_t(u))$. We denote by $J_u$, the kernel of $\phi$.

%Let $V_{n,t,d}$ be the $t$--spread Veronese algebra and $R=K[t_1,\ldots,t_m]$ be the polynomial ring in $m={\binom{n-(t-1)(d-1))}{d}}$ variables. Let $P_{n,d,t}\subset R$ be the toric ideal of $V_{n,t,d}.$

By using properties of algebras generated by sortable sets of monomials (\cite{St95} or \cite[Theorem 6.16]{EHBook}), we obtain the following result.

\begin{Theorem}\label{algebra}
The set of binomials $\MG=\{t_ut_v-t_{u^\prime}t_{v^\prime}: (u,v) \text{ unsorted }, (u^\prime, v^\prime)=\sort(u,v)\}$ is a
Gr\"obner basis of the toric ideal $J_u$.
\end{Theorem}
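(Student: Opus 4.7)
The approach is to reduce the statement to Sturmfels' general theorem on toric ideals of algebras generated by sortable sets, namely \cite[Theorem 6.16]{EHBook}. That result asserts the following: whenever $\MS \subset S_d$ is a sortable subset of monomials of common degree $d$, the toric ideal presenting $K[\MS]$ admits a quadratic Gr\"obner basis, with respect to the so-called sorting monomial order on the polynomial ring $K[t_v : v \in \MS]$, consisting precisely of the binomials $t_ut_v - t_{u'}t_{v'}$ indexed by unsorted pairs $(u,v)$ with sorting $(u',v')$.

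The key combinatorial step has already been carried out. All generators of $B_t(u)$ have the common degree $d = \deg(u)$, so $G(B_t(u)) \subset S_d$, and Proposition~\ref{sort} confirms that sorting any pair from $G(B_t(u))$ produces a pair that still lies in $G(B_t(u)) \times G(B_t(u))$. Hence the hypothesis of Sturmfels' theorem is satisfied with $\MS = G(B_t(u))$, and one may invoke it directly. The conclusion identifies $\MG$ as a Gr\"obner basis (with respect to the sorting order) of the toric ideal $J_u$, which is exactly the claim.

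The hard combinatorial content, namely verifying sortability, was handled in Proposition~\ref{sort} using the inequality $j_{2k-1}, j_{2k} \leq i_k$ drawn from \cite[Lemma 2.7]{N} together with the spread conditions on the divisors $w, v \in G(B_t(u))$. With that hypothesis in place, the proof of Theorem~\ref{algebra} reduces to a single citation and contains no further obstacle; all the algebraic machinery (the definition of the sorting order, the fact that the $\mathcal{S}$-pairs among sorting binomials reduce to zero, etc.) is absorbed into the cited theorem. The plan is therefore short: record that $G(B_t(u)) \subset S_d$, invoke Proposition~\ref{sort}, and apply \cite[Theorem 6.16]{EHBook}.
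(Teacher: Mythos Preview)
Your proposal is correct and follows exactly the paper's approach: the paper does not supply a separate proof environment for this theorem, but simply remarks (in the sentence preceding the statement) that it follows from the sortability established in Proposition~\ref{sort} together with the general result on sortable sets \cite{St95}, \cite[Theorem 6.16]{EHBook}. Your write-up spells out precisely this reduction.
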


Since an  algebra whose defining  ideal has a quadratic Gr\"obner basis is Koszul, we get the following corollary of the above theorem.

\begin{Corollary}
$K[B_t(u)]$ is Koszul.
\end{Corollary}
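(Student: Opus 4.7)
The plan is to derive the corollary as an immediate consequence of Theorem~\ref{algebra}, which exhibits a quadratic Gröbner basis $\MG$ of the defining toric ideal $J_u$ of $K[B_t(u)]$. Since $K[B_t(u)] \cong R/J_u$ where $R$ is the polynomial ring generated by the indeterminates $t_v$, $v \in G(B_t(u))$, the initial ideal $\ini(J_u)$ with respect to the sorting order is generated by quadratic monomials, namely the leading terms $t_u t_v$ of the elements in $\MG$.

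The key step is then to invoke the standard result (due to Fröberg, see for instance \cite[Theorem 6.7]{EHBook}) that if the defining ideal of a standard graded $K$-algebra $A$ has a Gröbner basis consisting of quadratic elements with respect to some monomial order, then $A$ is Koszul. This works because an algebra whose initial ideal (with respect to a suitable term order) is generated by quadratic monomials degenerates to a quadratic monomial algebra, and the Koszul property is preserved under such a Gröbner deformation; quadratic monomial algebras are Koszul.

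There is essentially no obstacle here: the entire content of the proof lies in having produced the quadratic Gröbner basis in Theorem~\ref{algebra}. The corollary itself is a one-line application of the fact that G-quadratic algebras are Koszul. The only thing to verify is that the sorting order used in Theorem~\ref{algebra} is an admissible monomial order on $R$, which is standard (see \cite[Section 6.2]{EHBook}), so that the implication from quadratic Gröbner basis to Koszulness applies.
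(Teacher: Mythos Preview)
Your proposal is correct and matches the paper's approach exactly: the paper simply states that an algebra whose defining ideal has a quadratic Gr\"obner basis is Koszul, and deduces the corollary immediately from Theorem~\ref{algebra}. Your additional remarks about the sorting order and the reference to \cite[Theorem 6.7]{EHBook} just make explicit what the paper leaves implicit.
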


Theorem~\ref{algebra} has another nice consequence.

\begin{Corollary}
$K[B_t(u)]$ is a Cohen-Macaulay normal domain.
\end{Corollary}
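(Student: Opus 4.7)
The plan is to deduce both conclusions directly from Theorem~\ref{algebra}. The crucial observation is that every binomial $t_ut_v - t_{u'}t_{v'}$ in the Gröbner basis $\MG$ has a squarefree leading monomial $t_ut_v$: the two variables $t_u$ and $t_v$ are distinct because if $u=v$ then $(u,v)$ is already sorted and no such binomial occurs in $\MG$. Consequently $\ini(J_u)$ is generated by squarefree (quadratic) monomials.

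Once the squarefreeness of the initial ideal is in hand, the corollary follows from two well-known results of combinatorial commutative algebra. First, by a theorem of Sturmfels (see \cite[Proposition 13.15]{St95}, or \cite[Theorem 6.7]{EHBook}), a toric ideal that admits an initial ideal generated by squarefree monomials defines a \emph{normal} affine semigroup ring. Applying this to $J_u$ yields that $K[B_t(u)]$ is normal. Second, by Hochster's theorem, every normal affine semigroup ring is Cohen-Macaulay, which gives the Cohen-Macaulay property. The integral domain property is automatic, since $K[B_t(u)]$ is a $K$-subalgebra of the polynomial ring $S=K[x_1,\ldots,x_n]$.

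There is essentially no real obstacle here: the substantive combinatorial work was carried out in Proposition~\ref{sort} and Theorem~\ref{algebra}, and the passage from a squarefree initial ideal to a normal Cohen-Macaulay domain is a routine invocation of standard machinery. The only minor point to record is the identification of the leading monomials in $\MG$, which is immediate from the very fact that $\MG$ has been stated as a Gröbner basis with marked terms $t_ut_v$ in Theorem~\ref{algebra}.
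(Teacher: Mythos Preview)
Your proposal is correct and follows essentially the same approach as the paper: observe from Theorem~\ref{algebra} that $J_u$ has a squarefree initial ideal, invoke Sturmfels' theorem to conclude normality, and then Hochster's theorem for Cohen--Macaulayness. Your additional remarks (why $t_u\neq t_v$ for unsorted pairs, and why the ring is a domain) simply make explicit points the paper leaves implicit.
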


\begin{proof}
Theorem~\ref{algebra} shows, in particular, that  $J_u$ has a squarefree initial ideal. By a theorem due to Sturmfels \cite{St95}, it follows that $K[B_t(u)]$ is a normal domain. Next, by a theorem of Hochster \cite{Ho72}, it follows that
$K[B_t(u)]$ is Cohen-Macaulay.
\end{proof}

\section{The generic initial ideals of $t$-spread strongly stable ideals}\label{genini}

The following theorem generalizes Theorem 11.2.7 in \cite{HHBook}. For a homogeneous ideal $I\subset S=K[x_1, \ldots, x_n]$, $\Gin(I)$ stands for the generic initial ideal of $I$ with respect to the reverse lexicographic order. Throughout this section we assume that $\chara(K)=0$.

\begin{Theorem}\label{Gin}
Let $I\subset S$ be a $t$--spread strongly stable ideal. Then $I=(\Gin(I))^{\sigma^t}$. In particular, $\Gin(I)= I^{\tau^t}$ and $\Gin(I^{\sigma^t})=I$.
\end{Theorem}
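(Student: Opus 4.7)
My plan is to establish directly that $\Gin(I)=I^{\tau^t}$; the main identity $I=(\Gin(I))^{\sigma^t}$ then follows by applying $\sigma^t$, since $\sigma^t$ and $\tau^t$ are mutually inverse (Corollary~\ref{bijection}). The consequence $\Gin(I^{\sigma^t})=I$ for strongly stable $I$ is obtained by applying the main identity to the $t$-spread strongly stable ideal $I^{\sigma^t}$ (which is $t$-spread strongly stable by iterated Proposition~\ref{equal}). The strategy parallels the proof of \cite[Theorem 11.2.7]{HHBook}, the case $t=1$.

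Set $J:=I^{\tau^t}$. By iterating Proposition~\ref{equal} in the reverse direction---each application of $\tau$ reduces the spreadness by one---$J$ is a $0$-spread strongly stable ideal, i.e., strongly stable in the usual sense. Since $\chara(K)=0$, the generic initial ideal $\Gin(I)$ is Borel-fixed and hence also strongly stable.

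Next I would show $\beta_{i,j}(\Gin(I))=\beta_{i,j}(J)$ for all $i,j$. By Theorem~\ref{ayesha}, $I$ is componentwise linear, so the theorem of Aramova--Herzog--Hibi (see \cite{HHBook}) gives $\beta(I)=\beta(\Gin(I))$. Iterating Theorem~\ref{betti} along the chain of $k$-spread strongly stable ideals $J,\,J^\sigma,\,J^{\sigma^2},\ldots,\,J^{\sigma^t}=I$ (the $k$-spread strong stability of $J^{\sigma^k}$ coming from Proposition~\ref{equal}) yields $\beta(J)=\beta(I)$. Combining, $\beta(\Gin(I))=\beta(J)$. Applying the Eliahou--Kervaire formula $\beta_{i,i+j}(L)=\sum_{u\in G(L)_j}\binom{\max(u)-1}{i}$ to each of the two strongly stable ideals $L=\Gin(I)$ and $L=J$, this equality of Betti numbers forces the multisets $\{\max(u):u\in G(\Gin(I))_j\}$ and $\{\max(u):u\in G(J)_j\}$ to coincide for every $j$.

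The remaining step, which I anticipate as the main obstacle, is to deduce $\Gin(I)=J$ from this multiset equality. It rests on the standard fact that a strongly stable ideal is uniquely determined by the multisets of maxima of its minimal generators across all degrees. I would argue by induction on the degree and, within each degree, on the value of $\max$: once the lower-degree generators and the same-degree generators of strictly smaller maximum have been reconstructed, the strongly stable condition combined with the prescribed number of degree-$j$ generators of maximum $m$ pins down which monomials occur---any other choice would, through the Borel closure, generate additional minimal generators and alter the multiset. Granted this uniqueness, $\Gin(I)=J=I^{\tau^t}$, and the theorem follows.
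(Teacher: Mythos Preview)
Your approach has a genuine gap at the final step. The claim that a strongly stable ideal is uniquely determined by the multisets $\{\max(u):u\in G(L)_j\}$---equivalently, by its graded Betti numbers via the Eliahou--Kervaire formula---is \emph{false}. A concrete counterexample in $K[x_1,x_2,x_3]$ is given by the two strongly stable ideals
\[
L_1=B_0(x_2^2x_3)=(x_1^3,\,x_1^2x_2,\,x_1x_2^2,\,x_2^3,\,x_1^2x_3,\,x_1x_2x_3,\,x_2^2x_3)
\]
and
\[
L_2=B_0(x_1x_3^2,\,x_2^3)=(x_1^3,\,x_1^2x_2,\,x_1x_2^2,\,x_2^3,\,x_1^2x_3,\,x_1x_2x_3,\,x_1x_3^2).
\]
Both are strongly stable and generated in degree $3$, and both have max-multiset $\{1,2,2,2,3,3,3\}$, hence identical graded Betti numbers; yet $x_2^2x_3\in L_1\setminus L_2$ and $x_1x_3^2\in L_2\setminus L_1$. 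So the inductive reconstruction you sketch cannot work: knowing the lower-degree part and the number of generators of each maximum does \emph{not} force which monomials appear.

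The steps leading up to this point are fine: $J=I^{\tau^t}$ is strongly stable, and the equalities $\beta(I)=\beta(\Gin(I))$ (via componentwise linearity) and $\beta(I)=\beta(J)$ (via Theorem~\ref{betti}) are correct. But to conclude $\Gin(I)=J$ you need something beyond Betti-number equality. The paper supplies this extra input by an induction on $\max\{\max(u):u\in G(I)\}$: one passes to the ideals $I'=I:(x_n)$ and $I''=(u\in G(I):\max(u)<n)$, obtains the sandwich $I''\subset(\Gin(I))^{\sigma^t}\subset I'$ from the inductive hypothesis, and then uses the Betti-number comparison \emph{together with} this containment to match up the generators with $\max=n$ one degree at a time. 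The containment is precisely the missing structural ingredient that rules out alternatives like $L_2$ when $L_1$ is intended.
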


\begin{proof}
We may assume  $t>0$ since the equality $I=\Gin(I)$ for strongly stable ideals is known \cite[Proposition 4.2.6]{HHBook}.

The proof is very similar to the proof of \cite[Theorem 11.2.7]{HHBook}, but we present it  in all the details for the convenience of the reader.

We use induction on the largest $\max(u)$ where $u\in G(I).$ By \cite[Lemma 11.2.8]{HHBook}, we may assume that there exists $u\in G(I)$
with $\max(u)=n.$ Following the proof of \cite[Theorem 11.2.7]{HHBook}, let $I^\prime=I:(x_n)$ and $I^{\prime\prime}$ be the ideal generated
by all the monomials $u\in G(I)$ with $\max(u)<n.$ Then, both ideals $I^\prime$ and $I^{\prime\prime}$ are $t$-spread strongly stable  and $I^{\prime\prime}\subset I\subset I^\prime.$ By the inductive hypothesis, we have
\[
I^\prime=(\Gin(I^\prime))^{\sigma^t} \text { and } I^{\prime\prime} =(\Gin(I^{\prime\prime}))^{\sigma^t}
\]
which implies that
\[I^{\prime\prime} \subset (\Gin(I))^{\sigma^t}\subset I^\prime.
\]

It is enough to show that
\begin{equation}\label{equGin1}
I\subset (\Gin(I))^{\sigma^t}.
\end{equation}
Indeed, it is well known that $I$ and $\Gin(I)$ have the same Hilbert function. By Theorem~\ref{betti}, it follows that $\Gin(I)$ and
$(\Gin(I))^{\sigma^t}$ have the same Hilbert function as well, therefore, $I$ and $(\Gin(I))^{\sigma^t}$ have the same Hilbert function. This remark together with (\ref{equGin1}) show that  $I= (\Gin(I))^{\sigma^t}.$

Let us now prove (\ref{equGin1}).  Since $I^{\prime\prime}\subset (\Gin(I))^{\sigma^t},$ we are reduced to proving that all the monomials
$u\in G(I)$ with $\max(u)=n$ belong to $(\Gin(I))^{\sigma^t}.$

Let $w_1,\ldots,w_q$ be the monomials in $G((\Gin(I))^{\sigma^t})$ with $\max(w_j)=n$ for all $j$ and $\deg w_1\leq \cdots \leq \deg w_q.$ Since
$(\Gin(I))^{\sigma^t}\subset I^\prime,$ we have $x_nw_j\in I.$ As $\max(w_j)=n$ and $I$ is a squarefree monomial ideal, it follows that
$w_j\in I$ for $1\leq j\leq q. $ This implies that, for every $1\leq j\leq q$, there exists $u_j\in G(I)$ such that $u_j \mid w_j.$ Moreover, if $\max(u_j)<n,$ then $u_j\in I^{\prime\prime} \subset (\Gin(I))^{\sigma^t},$ which is impossible since $u_j\neq w_j$ and
$w_j$ is a minimal generator of $(\Gin(I))^{\sigma^t}.$ Therefore, $\max(u_j)=n$ for $1\leq j\leq q.$

Let $u\in G(I)$ with $\max(u)=n.$ By Corollary~\ref{formula}, we have
\[\beta_{n-t(\deg u-1)-1,n-t(\deg u-1)-1+\deg u }(I)=\sum_{\stackrel{v\in G(I)}{\deg v=\deg u}}
\binom{\max(v)-t(\deg u-1)-1}{n-t(\deg u-1)-1}=
\]
\[
=|\{v\in G(I): \max(v)=n, \deg v=\deg u\}|.
\]
On the other hand,
\[\beta_{n-t(\deg u-1)-1,n-t(\deg u-1)-1+\deg u }((\Gin(I))^{\sigma^t})=\]
\[=\sum_{\stackrel{w\in G((\Gin(I))^{\sigma^t})}{\deg w=\deg u}}
\binom{\max(w)-t(\deg u-1)-1}{n-t(\deg u-1)-1}=
\]
\[=|\{w\in G((\Gin(I))^{\sigma^t}): \max(w)=n, \deg w=\deg u\}|.
\]
But, for every $ i,j,$ we have
\[\beta_{i, i+j}(I)\leq \beta_{i,i+j}(\Gin(I))=\beta_{i,i+j}((\Gin(I))^{\sigma^t}).\]
Thus, we obtain:
\begin{eqnarray}\label{equGin2}
|\{w\in G((\Gin(I))^{\sigma^t}): \max(w)=n, \deg w=\deg u\}|\geq \\ \nonumber
|\{v\in G(I): \max(v)=n, \deg v=\deg u\}|.
\end{eqnarray}
In particular, this implies that $\deg u_1$ cannot be strictly smaller than $\deg w_1,$ hence $\deg u_1=\deg w_1$ and $u_1=w_1$. In addition,  if we assume that $u_1=w_1,\ldots,u_k=w_k,$ then $\deg u_{k+1}$ cannot be strictly smaller than $\deg w_{k+1},$ which further  implies that
$u_{k+1}=w_{k+1}$ as well. Thus, $w_j\in G(I)$ for $1\leq j\leq q$, which yields
\begin{equation}\label{equGin3}
\{w\in G((\Gin(I))^{\sigma^t}): \max(w)=n\}\subset \{u\in G(I): \max(u)=n\}.
\end{equation}
However, by (\ref{equGin2}), we have
\[
|\{w\in G((\Gin(I))^{\sigma^t}): \max(w)=n\}|\geq |\{u\in G(I): \max(u)=n\}|.
\]
Consequently, in relation (\ref{equGin3}) we have equality. This shows that every monomial $u\in G(I)$ with $\max(u)=n$ belongs to
$(\Gin(I))^{\sigma^t}.$ This proves (\ref{equGin1}) and completes the proof of the theorem.
\end{proof}

\end{document}